\newcommand{\bu}{\boldsymbol{u}}
\newcommand{\bv}{\boldsymbol{v}}
\newcommand{\w}{\boldsymbol{w}}
\newcommand{\vv}{\boldsymbol{V}}
\newcommand{\xx}{\boldsymbol{X}}
\newcommand{\ff}{\boldsymbol{f}}
\newcommand{\bx}{{\boldsymbol{ \xi}}}
\newcommand{\tht}{\vartheta_{h,\tau}}
\newcommand{\bxt}{{\boldsymbol{ \xi}}_{h,\tau}}
\newcommand{\tu}{\widetilde{\boldsymbol{u}}}
\newcommand{\tp}{\widetilde{p}}
\newcommand{\be}{{\boldsymbol{\eta}}}
\newcommand{\uht}{\boldsymbol{u}_{h,\tau}}
\newcommand{\eht}{\boldsymbol{e}_{h,\tau}}
\newcommand{\vht}{\boldsymbol{v}_{h,\tau}}
\newcommand{\pht}{p_{h,\tau}}
\newcommand{\qht}{q_{h,\tau}}
\newcommand{\rhoht}{\varrho_{h,\tau}}
\newcommand{\intin}{\int_{I_n}}
\DeclareMathOperator*{\Div}{div}
\newcommand{\unh}{\boldsymbol{U}_{n,h}^i}
\newcommand{\enh}{\boldsymbol{e}_{n,h}^i}
\newcommand{\pnh}{P_{n,h}^i}
\newcommand{\wnh}{\boldsymbol{W}_{n,h}^i}
\newcommand{\Qnh}{Q_{n,h}^i}
\newcommand{\bet}{\be_\tau}
\newcommand{\beh}{\be_h}
\newcommand{\phih}{\varphi_h}
\newcommand{\phit}{\varphi_\tau}
\DeclareMathOperator*{\esssup}{ess\;sup}
\newcommand{\Qn}[1]{Q_n\left[#1\right]}
\newcommand{\Q}[1]{Q\left[#1\right]}
\newtheorem{assumption}{\sc Assumption}
\pgfplotsset{compat=newest}
\newtheorem{lemma}{Lemma}
\newtheorem{theorem}{Theorem}
\newtheorem{remark}{Remark}
\tikzset{external/system call={pdflatex \tikzexternalcheckshellescape %
		-halt-on-error -interaction=batchmode -jobname "\image" "\texsource"}}
\title{Higher-order discontinuous Galerkin time discretizations 
	for the evolutionary Navier--Stokes equations}
\date{October 19, 2019}	
\author{
  Naveed Ahmed\\
  Department of Mathematics \& Natural Sciences\\
  Gulf University for Science \& Technology\\
  Block 5, Building 1,\\Mubarak Al-Abdullah Area/West Mishref
  Kuwait\\
  \texttt{ahmed.n@gust.edu.kw} \\
   \And
 Gunar Matthies \\
  Technische Universit\"at Dresden,\\
  Institut f\"ur Numerische Mathematik,\\
  01062 Dresden, Germany\\
  \texttt{gunar.matthies@tu-dresden.de} \\
}
\begin{document}
\maketitle

\begin{abstract}
	Discontinuous Galerkin methods of higher order are applied as temporal
	discretizations for the transient Navier--Stokes equations. The spatial
	discretization based on inf-sup stable pairs of finite element spaces is
	stabilised using a one-level local projection stabilisation method.
	Optimal error bounds for the velocity with constants independent of the
	viscosity parameter are obtained for the semi-discrete case. For the
	fully discrete case, error estimates for both velocity and pressure are
	given. Numerical results support the theoretical predictions.
\end{abstract}

\keywords{Evolutionary Navier--Stokes equations \and
	inf-sup stable finite elements \and local projection stabilisation \and 
	discontinuous Galerkin	methods\\
	2000 Math Subject Classification: 65M12, 65M15, 65M60}

\section{Introduction}
\label{sec:intro}
Time-dependent flows of incompressible fluids can be described using the
transient incompressible Navier--Stokes equations that read in
dimensionless form
\begin{equation}\label{eq:nse}
\begin{alignedat}{2}
\bu'-\nu \Delta \bu +\bu \cdot \nabla \bu + \nabla p &= \ff &\quad
&\text{in } (0,T]\times \Omega, \\
\nabla \cdot \bu &= 0 &\quad &\text{in } (0,T]\times \Omega,
\end{alignedat} 
\end{equation}
where $\Omega\subset \mathbb{R}^d$, $d\in\{2,3\}$, denotes a domain with
Lipschitz boundary $\Gamma$ and $I=[0,T]$ a finite time interval with final
time $T>0$. Moreover, $\ff$ is a given body force, $\nu$ the viscosity,
$\bu$ and $p$ the velocity field and the pressure, respectively. The prime
denotes the time derivative of $\bu$ in a suitable sense. 
System~\eqref{eq:nse} of partial differential equations has to be closed
with an appropriate initial condition for the velocity at $t=0$ and
boundary conditions for the velocity on $(0,T]\times\Gamma$. For simplicity
of presentation, we consider the Navier--Stokes equations~\eqref{eq:nse}
equipped with homogeneous Dirichlet boundary conditions.

The analysis of transient incompressible Navier--Stokes equations is still
a great challenge in numerical analysis. There are several severe problems
that make the theoretical investigations demanding. Since the unique
solvability in three space dimensions is still open, regularity assumptions
are usually made. Furthermore, the handling of the nonlinear convection
term leads in general to an exponential grows of error bounds resulting
from an application of Gronwall's lemma.

In order to solve~\eqref{eq:nse} numerically, discretizations in space and
time are needed. We will consider in this paper temporal discretization by
discontinuous Galerkin (dG) methods of arbitrary order $k\ge 0$ and
spatial discretizations based on inf-sup stable pairs of finite
element spaces of order $r\ge 2$.

Since we are interesting in the convection-dominated case ($\nu\ll 1$),
stabilisation of the spatial discretization become necessary.
A very popular stabilisation
technique is the streamline upwind Petrov--Galerkin (SUPG) method,
see~\cite{HB79,BH82}, that is in most cases combined with the
pressure-stabilisation Petrov--Galerkin (PSPG) method,
see~\cite{HFB86,JS86,Tez92,John-Novo-PSPG}. The application of SUPG and
PSPG leads to an nonphysical coupling between velocity and pressure.
Furthermore, the non-symmetry of the stabilization may introduce additional
difficulties. Applied to time-dependent incompressible flow problems, the
drawbacks of SUPG and PSPG are more severe since much more terms have to
assembled, see~\cite{JS08,JN11}. We refer to~\cite{BBJL07} for a detailed
discussion on SUPG/PSPG methods.

We will consider the local projection stabilization (LPS) method to account
for dominating convection. Originally proposed for the Stokes equations
in~\cite{BB01}, the LPS methods was successfully transferred to transport
problems in~\cite{BB04}. LPS methods have been applied to the stationary Oseen
equations by~\cite{BB06,MST07,MT15} and convection-diffusion
problems by~\cite{MST08,AMTX11,AM15,BJK13}. The stabilising effect of LPS
schemes is based on a projection into a discontinuous space and results in
an additional control on the fluctuations of the gradient or parts of it.
Although LPS methods are weakly consistent only, the consistency error can
be bounded in an optimal way. The first LPS methods were two-level methods
where the projection space is defined on a coarser mesh. This leads to
additional couplings of degrees of freedoms belonging to neighbouring mesh
cell. Hence, the matrix stencil increases. One way to circumvent this
drawback is the use of one-level LPS methods where approximation space and
projection space are defined on the same mesh. In general, the
approximation spaces are enriched compared to standard spaces. However, the
additional degrees of freedom could be eliminated by static condensation.

A method using a grad-div stabilization to solve the transient Oseen
problem was studied in~\cite{FGJN16} where optimal estimates for the
divergence and the pressure were shown.

Time-dependent Oseen equations have been investigated in~\cite{Lube_Oseen}.
The local projection stabilization principle was applied to the derivative
in streamline-direction and to the divergence constraint separately.
Provided that the mesh width has the same order like the square root of the
viscosity, error estimates were obtained in~\cite{Lube_Oseen}. To
circumvent this unrealistic condition, the velocity approximation space
and the projection space have to fulfil local compatibility conditions.
The transient Navier--Stokes equations with this type of local projection
stabilization was investigated in~\cite{ADL15} where estimates for the
velocity error in the semi-discrete case were shown. The analysis of
the fully discretized problems using high-order term-by-term LPS methods
was considered in~\cite{ACJR16}.

Common time discretizations for incompressible flow problems are
$\vartheta$-schemes. Unfortunately, they are at most of second order like
the trapezoidal rule or the fractional-step $\vartheta$-scheme. In
addition, these methods do not provide a built-in mechanism for adaptive
time-step control. Just a few authors have considered higher order methods
in time like diagonally implicit Runge--Kutta methods, Rosenbrock--Wanner
methods, or continuous Galerkin--Petrov schemes of second order, see,
e.~g., \cite{JMR06,JR10,HST13}. To the best of our knowledge, no numerical
analysis has been provided for the first two classes applied to
incompressible flow problems or just convection-diffusion equations so far.
The cGP method in time and spatial stabilization by LPS method for the
transient Oseen equations was investigated in \cite{AJMN18}, where optimal
error bounds for velocity and pressure with constants that do not depends
on the viscosity parameter were obtained. Stepping schemes applied to the
time-dependent Navier--Stokes equations are given in~\cite{AM17}.

To discretise in time, we apply discontinuous Galerkin (dG) methods.
Discontinuous Galerkin methods were introduced for the first time 
in~\cite{RH73} to handle neutron transport problems. The analysis of dG
methods starts with~\cite{LR74}. For scalar hyperbolic problems,
theoretical investigation are given in~\cite{JP86}. Space-time dG
methods for convection-diffusion-reaction problems are studied
in~\cite{FHS07}. The dG methology was transferred to elliptic
problems by~\cite{WH1978} and to compressible and incompressible flow
problems, see~\cite{DF04, PV08} and the references therein. Temporal
discretizations of systems of ordinary differential equations by dG schemes
were introduced and analysed in~\cite{EEHJ96}. We also refer
to~\cite{Tho06}. Space-time dG finite element methods have been
applied to time-dependent advection-diffusion problems~\cite{SVD06}
and flow problems~\cite{VS08}. The combination of dG in time and LPS in space
for transient convection-diffusion-reaction equations has been studied
in~\cite{AMTX11}. Temporal discretizations using dG combined with an
equal-order interpolation for velocity and pressure applied to the
transient Stokes problem was studied in~\cite{ABM17}. Error estimates for
the semi-discrete and the fully discrete cases were proved. In~\cite{AJ15},
the dG and cGP methods in time combined with LPS and SUPG in space were considered for the convection-diffusion-reaction equation. It was shown that adaptive step control 
based on the post processed solution leads to the time step lengths that properly reflects
the dynamics of the solution. 

This paper studies the combination of the LPS method in space with
the dG($k$) method in time applied to the Navier--Stokes equations. For the
semi-discrete case, a stability results and an optimal estimate for
the velocity error will be given where all constants are independent of the
viscosity parameter $\nu$ in such sense that they depend just on higher
Sobolev norms of the solution $(\bu,p)$. Stability and convergence results
for the velocity error in the fully discrete case are proved. Also here,
the constants inside the estimates depend only via solution norms on the
viscosity. Moreover, an error estimate for the pressure in the fully
discrete case will be given. Unfortunately, the error constant depends on
the inverse of the smallest time step length.

The remainder of the paper is organised as follows:
Section~\ref{sec:prelim} considers preliminaries and provides used
notation. The error analysis for the semi-discrete problem are derived in
Sect.~\ref{sec:semi}. The temporal discretization by dG methods is given
in Sect.~\ref{sec:time_disc} where also stability properties are studied.
Moreover, error estimates for both velocity and pressure are showed.
Numerical results will be presented in Sect.~\ref{sec:numerics}.

\section{Preliminaries and notation}
\label{sec:prelim}
Throughout this paper, standard notation and conventions will be used.
For a measurable set $G\subset\mathbb{R}^d$, the inner product in $L^2(G)$
will be denoted by $(\cdot,\cdot)_G$. The norm and semi-norm in $W^{m,p}(G)$
are given by $\|\cdot\|_{m,p,G}$ and $|\cdot|_{m,p,G}$, respectively.
In the case $p=2$, we write $H^m(G)$, $\|\cdot\|_{m,G}$, and $|\cdot|_{m,G}$ 
instead of $W^{m,2}(G)$, $\|\cdot\|_{m,2,G}$, and $|\cdot|_{m,2,G}$.
If $G=\Omega$, the index $G$ in inner products, norms, and semi-norms will
be omitted.
Note that all definitions are extended to the cases of
vector-valued and tensor-valued arguments.
The subspace of functions from $H^1(\Omega)$
having zero boundary trace is denoted by $H^1_0(\Omega)$. 
The duality pairing between a
space $W$ and its dual $W'$ will be denoted by
$\langle \cdot,\cdot\rangle$. First and $j$-th order temporal derivatives
of a function $v$ are denoted by $v'$ and $v^{(j)}$, respectively.
Based on a Banach space $W$ with norm $\|\cdot\|_W$, the spaces
\begin{align*}
L^2(0,t;W) & \coloneqq  \left\{
v:[0,t]\to W\::\: \int_0^t \|v(s)\|_W^2~ds < \infty
\right\},\\
H^m(0,t;W) & \coloneqq  \left\{
v\in L^2(0,t;W) \::\: v^{(j)}\in L^2(0,t;W),\;1\le j\le m \right\},
\quad m\ge 1,\\
C(0,t;W) & \coloneqq  \left\{v:[0,t]\to W\::\: v\textrm{ is continuous
	with respect to time} \right\},\\
C^m(0,t;W) & \coloneqq \left\{v:[0,t]\to W\::\: v\textrm{ is $m$-times
	continuously differentiable in time}\right\}
\end{align*}
are defined
where the temporal derivatives $v^{(j)}$, $1\le j\le m$, in the definition
of $H^m(0,t;W)$ have to be understood in the sense of distributions.
We use in the case $t=T$ the abbreviations $L^2(W)$, $H^m(W)$, $C(W)$, and
$C^m(W)$ for the above defined spaces which are equipped with
\begin{alignat*}{2}
\|v\|_{L^2(W)} & \coloneqq  \left(\int_0^T\|v(t)\|^2_W\, dt\right)^{1/2},
&\qquad
\|v\|_{H^{m}(W)} & \coloneqq  \left(\sum_{j=0}^m
\|v^{(j)}\|_{L^2(W)}^2\right)^{1/2},
\\
\|v\|_{C(W)} & \coloneqq  \sup_{t\in I} \|v(t)\|_W,
&\qquad
\|v\|_{C^m(W)} & \coloneqq \sum_{j=0}^m \|v^{(j)}\|_{C(W)}
\end{alignat*}
as norms. In addition, we introduce
\[
L^\infty(W) := \left\{ v:[0,T]\to W \::\: \esssup_{0\le t\le T} \|v(t)\|_W 
< \infty \right\}
\]
where $\esssup$ denoted the essential supremum.

In order to derive a variational form of~\eqref{eq:nse}, we introduce the
spaces
\begin{equation*}
Q \coloneqq L^2_0(\Omega) = \big\{ q\in L^2(\Omega)\::\: (q,1) =
0\big\},\qquad \vv \coloneqq H^1_0(\Omega)^d.
\end{equation*}
Furthermore, let $C_F$ denote the Friedrichs constant
fulfilling
\begin{equation}
\label{eq:CF}
\|\bv\|_{\vv} \le C_F \|\nabla\bv\|_0\qquad\forall \bv\in\vv.
\end{equation}
In addition, we define
\[
\xx \coloneqq \big\{\bv\in L^2(\vv),\; \bv' \in L^2(\vv')\big\}
\]
where $\vv'=H^{-1}(\Omega)^d$ denotes the dual space of $\vv$. Note that
$\bv(0)$ is well-defined for $\bv\in\xx$ since the mapping
$\bv:[0,T]\to L^2(\Omega)^d$ is continuous.

A variational formulation of problem \eqref{eq:nse} reads: \medskip 

Find $\bu\in\xx$ with $\bu(0)=\bu_0$ and $p\in L^2(Q)$ such that
\begin{equation}\label{eq:weak_form}
\big\langle\bu'(t),\bv\big\rangle
+ A\left(  \big(\bu(t),p(t) \big), \big(\bv, q \big)\right)
+ \big((\bu(t)\cdot\nabla)\bu(t),\bv \big)
= \big\langle\ff(t), \bv\big\rangle \quad
\forall \bv\in \vv,\, q\in Q
\end{equation}
for almost all $t\in I$ where the bilinear form $A$ is given by
\begin{equation*}
A\big(  (\bv,q), (\w, r)\big)
\coloneqq  \nu(\nabla \bv, \nabla \w) - (q,\Div \w) + (r,\Div \bv).
\end{equation*}
Note that the initial condition $\bu(0)=\bu_0$ is well-defined since
$\bu\in\xx$.
For studying the existence of a velocity solution of~\eqref{eq:weak_form},
this system is usually considered in the subspace
\begin{equation*}
\vv^{\textrm{div}} \coloneqq  \big\{\bv \in \vv : (\Div \bv, q) = 0 \text{ for all } q\in
Q\big\}
\end{equation*}
of divergence-free functions. Using
\[
\xx^{\textrm{div}} \coloneqq
\big\{
\bv\in\xx\::\: \bv\in L^2\big(\vv^{\textrm{div}}\big)
\big\},
\]
the velocity solution
of~\eqref{eq:weak_form} can be computed by solving the problem:\medskip

Find $\bu \in \xx^{\textrm{div}}$ with $\bu(0)=\bu_0$ such that
\begin{equation}\label{eq:weak_form_pf}
\big\langle\bu'(t),\bv\big\rangle+\nu \big(\nabla \bu(t),
\nabla \bv\big)  + 
\big((\bu(t)\cdot\nabla)\bu(t),\bv \big)
= \big\langle\ff(t), \bv\big\rangle\quad
\forall \bv\in \vv^{\mathrm{div}}
\end{equation}
for almost all $t\in I$.

For the finite element discretization of \eqref{eq:weak_form}, let
$\{\mathcal T_h\}$ be a family of shape-regular triangulations of $\Omega$
into open $d$-simplices, quadrilaterals, or hexahedra.
The diameter of a mesh cell $K\in\mathcal{T}_h$ will be denoted by $h_K$
while the mesh size $h$ is defined as
$h\coloneqq \max\limits_{K\in\mathcal{T}_h}h_K$. For a mesh-cell
dependent quantity $\psi_K$, we will write $\psi_K\sim h_K^{\alpha}$ if
there are positive constants $A$ and $B$, both independent of $K$, such
that $A\,h_K^{\alpha}\le \psi_K\le B\,h_K^{\alpha}$ for all
$K\in\mathcal{T}_h$ and all $h$.

Let $Y_h \subset H^1_0 (\Omega)$
be a scalar finite element space of continuous, mapped piecewise
polynomial functions over $\mathcal{T}_h$. The finite element space
$\vv_h$ for approximating the velocity field is given by
$\vv_h \coloneqq  Y_h^d\cap \vv$. To discretise the pressure, let
$M_h\subset L^2(\Omega)$ denote a finite element space of continuous or
discontinuous functions with respect to $\mathcal{T}_h$. Furthermore, we
set $Q_h\coloneqq M_h\cap Q$. This paper considers inf-sup stable
pairs $(\vv_h,Q_h)$, i.e., there exists a positive constant $\beta_0$,
independent of $h$, such that
\begin{equation}\label{eq:disc_inf_sup}
\inf_{q_h\in Q_h\setminus\{0\}}
\sup_{\bv_h\in \vv_h\setminus\{\boldsymbol{0}\}} 
\frac{(\Div \bv_h, q_h)}{|\bv_h|_1 \|q_h\|_0}\ge \beta_0 > 0.
\end{equation}
Furthermore, we introduce
\begin{equation}
\label{eq:discdivfree}
\vv_h^{\mathrm{div}} \coloneqq  \big\{\bv_h \in \vv_h : (\Div \bv_h, q_h) = 0
\text{ for all } q_h\in M_h\big\}
\end{equation}
as space of discretely divergence-free functions. Note that
$\vv_h^{\mathrm{div}}$ can be equivalently defined using test functions
$q_h$ from $Q_h$ only since $\vv_h$ provides homogeneous Dirichlet boundary
conditions.

The semi-discrete standard Galerkin finite element method applied 
to~\eqref{eq:weak_form} reads
\medskip

Find $\bu_h\in H^1(\vv_h)$ with $\bu_h(0) = \bu_{0,h}$ and $p_h\in L^2(Q_h)$
such that
\begin{multline}\label{eq:galerkin}
\big( \bu_h'(t),\bv_h \big)+ A\left(  \big(\bu_h(t),p_h(t)\big),
\big(\bv_h, q_h\big)\right) +
n\big(\bu_h(t),\bu_h(t),\bv_h\big)
= \big\langle \ff(t), \bv_h\big\rangle\quad
\forall \bv_h\in \vv_h,\, q_h\in Q_h
\end{multline}
for almost all $t\in I$.
Note that
$\bu_{h,0}\in \vv_h$ is a suitable approximation of the initial velocity
$\bu_0$ in the finite element space $\vv_h$. Moreover, the initial
condition $\bu_h(0)=\bu_{h,0}$ is well-defined since $\bu_h\in H^1(\vv_h)$.
Furthermore, let $n$ denote the skew-symmetric form of the convective term
defined by
\begin{equation}
\label{skew}
n(\bu,\bv,\w) = \frac{1}{2} \Big[\big((\bu\cdot\nabla)\bv,\w \big)
- \big((\bu\cdot\nabla)\w,\bv\big)\Big].
\end{equation}
The trilinear form $n$ provides
\begin{equation}\label{eq:skewprop}
n(\bv,\w,\w) = 0 \qquad \forall \bv,\w\in \vv
\end{equation}
and
\begin{equation}
\label{eq:partint}
n(\bu,\bv,\w) = \big((\bu\cdot\nabla)\bv,\w\big)
+ \frac{1}{2} (\Div\bu,\bv\cdot\w)
\qquad\forall\bu,\bv,\w\in\vv.
\end{equation}
It is well-known that the standard Galerkin method~\eqref{eq:galerkin} is
unstable in the case of dominating convection unless $h$ is
unpractically small. The use of a stabilised discretization becomes
necessary.

This paper concentrates on the one-level variant of the local projection
stabilization method where
approximation space and projection space are defined on the same mesh.
For any $K\in \mathcal{T}_h$, let $D(K)$ be a finite-dimensional space and
$\pi_K : L^2(K)\to D(K)$ the associated local $L^2$-projection into
$D(K)$. The local fluctuation operator $\kappa_K : L^2(K)\to L^2(K)$
is given by $\kappa_K v\coloneqq v-\pi_K v$ and applied
component-wise to vector-valued and tensor-valued arguments. We define
\[
\kappa_h:L^2(\Omega)\to\bigoplus_{K\in\mathcal{T}_h} D(K),\qquad
\big(\kappa_h v\big)|_K \coloneqq \kappa_K\big(v|_K\big)\quad
\forall K\in\mathcal{T}_h
\]
as abbreviation.
Note that the estimate
\begin{equation}
\|\kappa_K\Div\bv\|_{0,K} \le \sqrt{d}\|\kappa_K\nabla\bv\|_{0,K},
\qquad K\in\mathcal{T}_h,\,\bv\in\vv,
\label{eq:kappadiv}
\end{equation}
holds true.

The stabilization term $S_h$ is defined by
\begin{align*}
S_h(\bv_h,\w_h) \coloneqq  \sum_{K\in \mathcal{T}_h}
\mu_K\big(\kappa_K \nabla \bv_h, \kappa_K\nabla \w_h\big)_K
\end{align*}
where $\mu_K$, $K\in\mathcal{T}_h$, are user-chosen non-negative constants.
Furthermore, we set
\[
\mu_h^{\textrm{min}} \coloneqq \min_{K\in\mathcal{T}_h} \mu_K,
\qquad
\mu_h^{\textrm{max}} \coloneqq \max_{K\in\mathcal{T}_h} \mu_K.
\]
The precise choice of $\mu_K$ will be discussed in the upcoming sections.
Note that also the separate stabilization of the divergence constraint and
the derivative in streamline direction is possible,
see~\cite{MST07,MT15,ADL15}.

The stabilised semi-discrete problem reads:\medskip

Find $\bu_h\in H^1(\vv_h)$ with $\bu_h(0) = \bu_{0,h}$ and $p_h\in L^2(Q_h)$
such that
\begin{multline}\label{eq:lps}
\big( \bu_h'(t),\bv_h \big) + A_h \left( \big(\bu_h(t),p_h(t)\big),
\big(\bv_h, q_h\big)\right) +
n\big(\bu_h(t),\bu_h(t),\bv_h\big)\\
= \big\langle \ff(t), \bv_h\big\rangle\quad
\forall \bv_h\in \vv_h,\, q_h\in Q_h
\end{multline}
for almost all $t\in I$ where $A_h$ is given by 
\begin{equation*}
A_h\left(  \big(\bv,q \big), \big(\w, r \big)\right)
=  A\left(  \big(\bv,p \big), \big(\w, r \big)\right)  + S_h(\bv,\w).
\end{equation*}
Note that
\begin{equation}
\label{eq:Ah_coer}
A_h\big((\bv,q),(\bv,q)\big) = \nu\|\nabla\bv\|_0^2 + S_h(\bv,\bv)
\end{equation}
for all $(\bv,q)\in \vv\times Q$.

For our subsequent analysis, several assumptions on $\vv_h$, $M_h$, and
$D(K)$ will be made. Note that $r\ge 2$ will be a fixed integer describing
the order of the spatial discretization. The dependence of constants on $r$
will not be elaborated in this paper.
\begin{assumption}\label{assmption_a1}
	There exists an interpolation operator $j_h:\vv\to\vv_h$ which
	provides for $p\in[2,\infty]$ the approximation property
	\begin{equation}
	\label{j1}
	\|\w - j_h\w\|_{0,p} + h |\w - j_h\w|_{1,p} \le C h^{\ell} \|\w\|_{\ell,p}
	\qquad\forall \w\in W^{\ell,p}(\Omega)^d,\,1\le\ell\le r+1,
	\end{equation}
	and preserves the discrete divergence
	\begin{equation}
	\label{discdiv}
	\big(\Div(\w - j_h\w),q_h\big) = 0\qquad\forall q_h\in
	Q_h,\,\w\in\vv.
	\end{equation}
	In addition, there is an interpolation operator $i_h:L^2(\Omega)\to M_h$ with
	$i_h q\in Q_h$ for $q\in Q$ which guarantees the approximation property
	\begin{alignat}{2}
	\label{j2}
	\|q - i_h q\|_{0,p} + h |q - i_h q|_{1,p} & \le C h^{\ell} \|q\|_{\ell,p}
	&\qquad&\forall q\in W^{\ell,p}(\Omega),\,1\le\ell\le r,\\
	\intertext{the stability}
	\label{js}
	\|q - i_h q\|_0 & \le C_i \|q\|_0 &\qquad&\forall q\in L^2(\Omega),
	\intertext{and}
	\label{j3}
	(q - i_h q, r_h)_K & = 0 &\qquad&\forall q\in L^2(\Omega),\,r_h\in D(K)
	\end{alignat}
	for all $K\in\mathcal{T}_h$.
\end{assumption}

The existence of velocity interpolation operators $j_h$
fulfilling~\eqref{j1} and~\eqref{discdiv} has been studied
in~\cite{GS03}.
In the case of discontinuous pressure approximations with $D(K)\subset
M_h|_K$, the $L^2(\Omega)$-projection into $M_h$ fulfils~\eqref{j2},
\eqref{js}, and~\eqref{j3} since it localises to the $L^2(K)$-projections.
A detailed discussion on interpolation operators satisfying~\eqref{j3} can
be found in~\cite{MT15}. Note that~\eqref{j1} ensures the bounds
\begin{equation}
\label{jstab}
\|\nabla j_h \w\|_{0,p} \le C \|\w\|_{1,p},\quad
\|j_h \w\|_{0,p} \le C \|\w\|_{1,p},
\qquad p\in[2,\infty],\, \w\in W^{1,p}(\Omega)^2,
\end{equation}
hence, the interpolation operator $j_h$ is stable.

\begin{assumption}\label{assmption_a2}
	The fluctuation operator provides the approximation property
	\begin{equation}\label{kappa}
	\big\|\kappa_K q\big\|_{0,K} \leq C h^l_K\big|q\big|_{l,K}
	\qquad \forall   K\in \mathcal {T}_h,\;
	\forall   q\in H^l(K), \; 0\leq l \leq r.
	\end{equation}
\end{assumption}
Projection spaces $D(K)$ which guarantee~\eqref{kappa} are given
in~\cite{MST07}.

Finally, we mention that the combination $\vv_h=Q_r$,
$M_h=P_{r-1}^{\text{disc}}$ with $D(K)=P_{r-1}(K)$ fulfils for $r\ge 2$
on quadrilateral/hexahedral meshes all assumptions. For details,
we refer to~\cite{GS03,MST07,MT15}.

\section{Error analysis for the semi-discrete case}
\label{sec:semi}
This section considers stability properties and error estimates for the
stabilised semi-discrete problem~\eqref{eq:lps}.

The following lemma states the stability of the velocity solution $\bu_h$.
\begin{lemma}
	Let $\bu_{0,h}\in \vv_h$ and $ \ff\in L^2(\vv')$. Then
	problem~\eqref{eq:lps} satisfies the stability estimate
	\begin{equation}\label{eq:sd_stability}
	\|\bu_h(t)\|_0^2 + \int_0^t\nu \|\nabla \bu_h(s)\|_0^2
	+ 2 S_h\big( \bu_h(s),\bu_h(s)\big)\,ds
	\le \|\bu_{0,h}\|_0^2 + \frac{C_F^2}{\nu} \int_0^t \|\ff(s)\|_{\vv'}^2\,ds
	\end{equation}
	for almost all $t\in (0,T)$ where $C_F$ is the Friedrichs
	constant from~\eqref{eq:CF}.
\end{lemma}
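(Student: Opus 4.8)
The plan is to derive the energy estimate in the standard way: test the stabilised semi-discrete equation~\eqref{eq:lps} with the solution itself. Concretely, for almost all $t$, I would choose $\bv_h=\bu_h(t)$ and $q_h=p_h(t)$ in~\eqref{eq:lps}. The pressure terms in $A$ cancel because $-(p_h,\Div\bu_h)+(p_h,\Div\bu_h)=0$, and by~\eqref{eq:Ah_coer} the remaining part of $A_h$ reduces to $\nu\|\nabla\bu_h(t)\|_0^2+S_h(\bu_h(t),\bu_h(t))$. The convective term vanishes by the skew-symmetry property~\eqref{eq:skewprop}, i.e.\ $n(\bu_h,\bu_h,\bu_h)=0$. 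Finally $(\bu_h'(t),\bu_h(t))=\tfrac12\tfrac{d}{dt}\|\bu_h(t)\|_0^2$. This yields the differential identity
\[
\frac12\frac{d}{dt}\|\bu_h(t)\|_0^2+\nu\|\nabla\bu_h(t)\|_0^2+S_h\big(\bu_h(t),\bu_h(t)\big)=\langle\ff(t),\bu_h(t)\rangle.
\]

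Next I would bound the right-hand side using the duality pairing, the Friedrichs inequality~\eqref{eq:CF}, and Young's inequality:
\[
\langle\ff(t),\bu_h(t)\rangle\le\|\ff(t)\|_{\vv'}\|\bu_h(t)\|_{\vv}\le C_F\|\ff(t)\|_{\vv'}\|\nabla\bu_h(t)\|_0\le\frac{C_F^2}{2\nu}\|\ff(t)\|_{\vv'}^2+\frac{\nu}{2}\|\nabla\bu_h(t)\|_0^2.
\]
The term $\tfrac{\nu}{2}\|\nabla\bu_h(t)\|_0^2$ is absorbed into the left-hand side, leaving
\[
\frac{d}{dt}\|\bu_h(t)\|_0^2+\nu\|\nabla\bu_h(t)\|_0^2+2S_h\big(\bu_h(t),\bu_h(t)\big)\le\frac{C_F^2}{\nu}\|\ff(t)\|_{\vv'}^2.
\]
Integrating from $0$ to $t$ and using $\bu_h(0)=\bu_{0,h}$ gives exactly~\eqref{eq:sd_stability}. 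Note that no Gronwall argument is needed here, precisely because the skew-symmetric form kills the nonlinearity against the test function $\bu_h$; this is why the estimate is clean.

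The steps are all routine; the only points requiring a little care are the justification of the manipulations rather than their formal content. First, one should check that $\bu_h\in H^1(\vv_h)$ indeed makes $t\mapsto\|\bu_h(t)\|_0^2$ absolutely continuous with $\tfrac{d}{dt}\|\bu_h(t)\|_0^2=2(\bu_h'(t),\bu_h(t))$ for a.e.\ $t$ — this is a standard Lions--Magenes-type result. Second, choosing $q_h=p_h(t)$ as test function is legitimate since $p_h\in L^2(Q_h)$, so $p_h(t)\in Q_h$ for a.e.\ $t$; the pressure never actually enters the final estimate. Third, the identity $n(\bu_h,\bu_h,\bu_h)=0$ holds pointwise in $t$ by~\eqref{eq:skewprop} applied to $\bv=\w=\bu_h(t)\in\vv_h\subset\vv$. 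If anything is an ``obstacle'' it is merely making the regularity-in-time bookkeeping precise so that the pointwise-in-$t$ identity can be integrated; everything else is an immediate consequence of coercivity~\eqref{eq:Ah_coer}, skew-symmetry~\eqref{eq:skewprop}, and Young's inequality.
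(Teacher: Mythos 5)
Your proof is correct and follows exactly the same route as the paper's own (one-sentence) argument: test with $(\bu_h(t),p_h(t))$, use skew-symmetry~\eqref{eq:skewprop}, coercivity~\eqref{eq:Ah_coer}, the duality pairing with Friedrichs' inequality~\eqref{eq:CF} and Young's inequality, then integrate in time. The constants work out precisely to~\eqref{eq:sd_stability}, so nothing further is needed.
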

\begin{proof}
	The statement follows by setting $(\bv_h,q_h) = \big(\bu_h(t),p_h(t)\big)$
	in~\eqref{eq:lps}, using the skew-symmetry~\eqref{eq:skewprop} of the
	trilinear form $n$, the coercivity property \eqref{eq:Ah_coer} of $A_h$,
	the properties of the duality pairing between $\vv$ and $\vv'$, the
	Friedrichs inequality~\eqref{eq:CF}, an integration over the time interval
	$(0,t)$, and Young's inequality applied to the right-hand side.
\end{proof}

Provided $\ff$ is more regular, a $\nu$-independent bound can be shown.
\begin{lemma}
	Assuming the regularity $\ff \in L^1(L^2)$, the
	bound
	\begin{equation}\label{eq:sd_stability_improved}
	\frac{1}{2}  \|\bu_h(t)\|_0^2 + \int_0^t\nu \|\nabla \bu_h(s)\|_0^2
	+  S_h\big( \bu_h(s),\bu_h(s)\big)\,ds
	\le C_\mathrm{S}
	\end{equation}
	is obtained for almost all $t\in (0,T)$ where
	\begin{equation}
	\label{eq:CS}
	C_\mathrm{S}\coloneqq\|\bu_{0,h}\|_0^2 + \frac{3}{2}  \|\ff\|_{L^1(L^2)}^2
	\end{equation}
	is a constant depending on the problem data only.
\end{lemma}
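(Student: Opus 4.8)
The plan is to mimic the proof of the previous lemma but to exploit the stronger regularity $\ff\in L^1(L^2)$ so that no negative power of $\nu$ appears. First I would test equation~\eqref{eq:lps} with $(\bv_h,q_h)=\bigl(\bu_h(t),p_h(t)\bigr)$. The skew-symmetry~\eqref{eq:skewprop} kills the convective term, and~\eqref{eq:Ah_coer} turns the $A_h$-term into $\nu\|\nabla\bu_h(t)\|_0^2+S_h\bigl(\bu_h(t),\bu_h(t)\bigr)$, while $\bigl(\bu_h'(t),\bu_h(t)\bigr)=\tfrac12\tfrac{d}{dt}\|\bu_h(t)\|_0^2$. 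This yields the differential identity
\begin{equation*}
\frac12\frac{d}{dt}\|\bu_h(t)\|_0^2 + \nu\|\nabla\bu_h(t)\|_0^2 + S_h\bigl(\bu_h(t),\bu_h(t)\bigr) = \bigl(\ff(t),\bu_h(t)\bigr)
\end{equation*}
for almost all $t$.

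The key idea for the right-hand side is to estimate it directly in $L^2(\Omega)$ rather than in the dual pairing: by Cauchy--Schwarz, $\bigl(\ff(t),\bu_h(t)\bigr)\le\|\ff(t)\|_0\,\|\bu_h(t)\|_0$. Integrating the differential identity over $(0,t)$ and dropping the nonnegative viscous and stabilization contributions on the left where convenient, I would obtain
\begin{equation*}
\|\bu_h(t)\|_0^2 \le \|\bu_{0,h}\|_0^2 + 2\int_0^t \|\ff(s)\|_0\,\|\bu_h(s)\|_0\,ds .
\end{equation*}
Setting $g(t)\coloneqq\sup_{0\le s\le t}\|\bu_h(s)\|_0$ and using $\|\bu_h(s)\|_0\le g(t)$ inside the integral gives $g(t)^2\le\|\bu_{0,h}\|_0^2+2g(t)\|\ff\|_{L^1(L^2)}$. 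This is a quadratic inequality in $g(t)$; solving it (or completing the square) produces $g(t)\le\|\bu_{0,h}\|_0+2\|\ff\|_{L^1(L^2)}$, hence
\begin{equation*}
\|\bu_h(t)\|_0 \le \|\bu_{0,h}\|_0 + 2\|\ff\|_{L^1(L^2)}\qquad\text{for a.a. }t.
\end{equation*}
A routine algebraic step then bounds $\|\bu_h(t)\|_0^2$ and, in particular, $\|\bu_h(s)\|_0$ pointwise by $\|\bu_{0,h}\|_0+2\|\ff\|_{L^1(L^2)}$ for all $s\le t$.

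With this uniform bound in hand, I would return to the integrated identity, this time keeping the viscous and stabilization terms:
\begin{equation*}
\frac12\|\bu_h(t)\|_0^2 + \int_0^t \nu\|\nabla\bu_h(s)\|_0^2 + S_h\bigl(\bu_h(s),\bu_h(s)\bigr)\,ds = \frac12\|\bu_{0,h}\|_0^2 + \int_0^t\bigl(\ff(s),\bu_h(s)\bigr)\,ds,
\end{equation*}
and bound the last term by $\|\ff\|_{L^1(L^2)}\bigl(\|\bu_{0,h}\|_0+2\|\ff\|_{L^1(L^2)}\bigr)$. Collecting terms gives the left-hand side $\le\tfrac12\|\bu_{0,h}\|_0^2+\|\ff\|_{L^1(L^2)}\|\bu_{0,h}\|_0+2\|\ff\|_{L^1(L^2)}^2$, and a further application of Young's inequality $\|\ff\|_{L^1(L^2)}\|\bu_{0,h}\|_0\le\tfrac12\|\bu_{0,h}\|_0^2+\tfrac12\|\ff\|_{L^1(L^2)}^2$ absorbs the cross term, yielding the constant $C_{\mathrm S}=\|\bu_{0,h}\|_0^2+\tfrac32\|\ff\|_{L^1(L^2)}^2$ from~\eqref{eq:CS}. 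I do not anticipate a serious obstacle here; the only point requiring mild care is the passage from the quadratic inequality for $g(t)$ to the linear bound, and the correct bookkeeping of constants so that exactly the factor $\tfrac32$ appears — notice that the entire argument is $\nu$-free precisely because the $L^2$ bound on $\ff$ lets us avoid the Friedrichs inequality and the resulting $C_F^2/\nu$ factor of the previous lemma.
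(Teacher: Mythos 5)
The paper itself gives no detailed proof here (it simply cites Lemma~3.1 of the reference \cite{ADL15}), so you are supplying an argument the paper omits; your overall strategy --- test with $(\bu_h,p_h)$, use skew-symmetry and coercivity, pair $\ff$ in $L^2$ instead of in duality, derive an a~priori sup bound on $\|\bu_h\|_0$, and reinsert it --- is exactly the standard route and is sound in structure. However, your bookkeeping does not produce the constant $\tfrac32$ claimed in \eqref{eq:CS}. Writing $a=\|\bu_{0,h}\|_0$ and $b=\|\ff\|_{L^1(L^2)}$, your quadratic inequality $g^2\le a^2+2bg$ only yields $g\le b+\sqrt{a^2+b^2}\le a+2b$, and then your final display is
\begin{equation*}
\tfrac12 a^2 + b(a+2b) \;=\; \tfrac12 a^2 + ab + 2b^2 \;\le\; a^2 + \tfrac52 b^2 ,
\end{equation*}
since $2+\tfrac12=\tfrac52$, not $\tfrac32$ as you assert. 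So as written you prove the lemma only with $C_{\mathrm S}$ replaced by $\|\bu_{0,h}\|_0^2+\tfrac52\|\ff\|_{L^1(L^2)}^2$.

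The fix is to sharpen the pointwise bound from $a+2b$ to $a+b$, which the crude replacement $\|\bu_h(s)\|_0\le g(t)$ inside the integral cannot deliver. Instead, either divide the differential inequality $\tfrac12\tfrac{d}{dt}\|\bu_h\|_0^2\le\|\ff\|_0\,\|\bu_h\|_0$ by $\|\bu_h\|_0$ to get $\tfrac{d}{dt}\|\bu_h\|_0\le\|\ff\|_0$ and hence $\|\bu_h(t)\|_0\le a+b$, or apply the standard quadratic Gronwall-type lemma: if $\phi(t)^2\le a^2+2\int_0^t\psi\,\phi$ with $\psi\ge0$, then $\phi(t)\le a+\int_0^t\psi$ (proved by setting $F=a^2+2\int_0^t\psi\phi$ and noting $(\sqrt F)'\le\psi$). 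With $\|\bu_h(s)\|_0\le a+b$ the final step becomes $\tfrac12 a^2+b(a+b)=\tfrac12 a^2+ab+b^2\le a^2+\tfrac32 b^2$, which is exactly \eqref{eq:CS}. With this one replacement your proof is complete and correct.
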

\begin{proof}
	This result is a straightforward adaption of Lemma~3.1
	by~\cite{ADL15} where
	a local projection scheme with separate stabilization of streamline
	derivative and divergence constraint was considered.
\end{proof}

Our analysis will exploit that some appearing functions belong to the space
\begin{equation}
\label{eq:vhs}
\widetilde{\vv}_h^{\mathrm{div}} \coloneqq
\big\{ \bv\in\vv\::\: (\Div\bv,q_h) = 0 \text{ for all } q_h\in M_h \big\}
\end{equation}
that covers $\vv^{\mathrm{div}}+\vv_h^{\mathrm{div}}$. We frequently use
following estimate.
\begin{lemma}\label{lem:discdiv}
	Let $\bv_h\in\widetilde{\vv}_h^{\textrm{div}}$.
	Then, the estimate
	\begin{equation*}
	\big|(\Div\bv_h,\boldsymbol{\varphi}\cdot\boldsymbol{\psi})\big|
	\le C_d S_h(\bv_h,\bv_h)^{1/2}
	\|\boldsymbol{\varphi}\|_{0,\infty} \|\boldsymbol{\psi}\|_0,\qquad
	\boldsymbol{\varphi}\in L^{\infty}(\Omega)^d,\; \boldsymbol{\psi}\in L^2(\Omega)^d,
	\end{equation*}
	holds true where
	\begin{equation}
	\label{eq:Cd}
	C_d\coloneqq \frac{C_i\,\sqrt{d}}{\sqrt{\mu_h^{\textrm{min}}}}
	\end{equation}
	with $C_i$ from~\eqref{js}.
\end{lemma}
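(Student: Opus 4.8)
The plan is to use the discrete divergence-freeness of $\bv_h$ twice: once globally, to subtract a pressure-space interpolant of $\boldsymbol{\varphi}\cdot\boldsymbol{\psi}$, and once cell-wise via the orthogonality~\eqref{j3}, so that $\Div\bv_h$ enters only through its fluctuation $\kappa_K\Div\bv_h$, which is exactly what $S_h$ controls. Since $\boldsymbol{\varphi}\in L^\infty(\Omega)^d$ and $\boldsymbol{\psi}\in L^2(\Omega)^d$, the product $\boldsymbol{\varphi}\cdot\boldsymbol{\psi}$ lies in $L^2(\Omega)$, so $i_h(\boldsymbol{\varphi}\cdot\boldsymbol{\psi})\in M_h$ is well-defined. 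Because $\bv_h\in\widetilde{\vv}_h^{\mathrm{div}}$ and $i_h(\boldsymbol{\varphi}\cdot\boldsymbol{\psi})\in M_h$, the definition~\eqref{eq:vhs} gives $(\Div\bv_h,i_h(\boldsymbol{\varphi}\cdot\boldsymbol{\psi}))=0$, hence $(\Div\bv_h,\boldsymbol{\varphi}\cdot\boldsymbol{\psi})=(\Div\bv_h,\boldsymbol{\varphi}\cdot\boldsymbol{\psi}-i_h(\boldsymbol{\varphi}\cdot\boldsymbol{\psi}))$.

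Next I would work cell-wise. On each $K\in\mathcal{T}_h$ the function $\pi_K(\Div\bv_h|_K)$ belongs to $D(K)$, so~\eqref{j3} applied with $q=\boldsymbol{\varphi}\cdot\boldsymbol{\psi}$ and $r_h=\pi_K\Div\bv_h$ yields $(\pi_K\Div\bv_h,\boldsymbol{\varphi}\cdot\boldsymbol{\psi}-i_h(\boldsymbol{\varphi}\cdot\boldsymbol{\psi}))_K=0$. Subtracting this from the $K$-contribution leaves $(\kappa_K\Div\bv_h,\boldsymbol{\varphi}\cdot\boldsymbol{\psi}-i_h(\boldsymbol{\varphi}\cdot\boldsymbol{\psi}))_K$; applying the Cauchy--Schwarz inequality on each $K$ and then once more in the sum over the cells bounds $|(\Div\bv_h,\boldsymbol{\varphi}\cdot\boldsymbol{\psi})|$ by $\big(\sum_{K}\|\kappa_K\Div\bv_h\|_{0,K}^2\big)^{1/2}\,\|\boldsymbol{\varphi}\cdot\boldsymbol{\psi}-i_h(\boldsymbol{\varphi}\cdot\boldsymbol{\psi})\|_0$.

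Finally I would estimate the two factors separately. For the second, the $L^2$-stability~\eqref{js} of $i_h$ together with $\|\boldsymbol{\varphi}\cdot\boldsymbol{\psi}\|_0\le\|\boldsymbol{\varphi}\|_{0,\infty}\|\boldsymbol{\psi}\|_0$ gives $\|\boldsymbol{\varphi}\cdot\boldsymbol{\psi}-i_h(\boldsymbol{\varphi}\cdot\boldsymbol{\psi})\|_0\le C_i\|\boldsymbol{\varphi}\|_{0,\infty}\|\boldsymbol{\psi}\|_0$. For the first, I would use~\eqref{eq:kappadiv} cell-wise, $\|\kappa_K\Div\bv_h\|_{0,K}^2\le d\,\|\kappa_K\nabla\bv_h\|_{0,K}^2$, and then $\mu_h^{\mathrm{min}}\le\mu_K$ to obtain $\sum_{K}\|\kappa_K\Div\bv_h\|_{0,K}^2\le \frac{d}{\mu_h^{\mathrm{min}}}\sum_{K}\mu_K\|\kappa_K\nabla\bv_h\|_{0,K}^2=\frac{d}{\mu_h^{\mathrm{min}}}S_h(\bv_h,\bv_h)$. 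Multiplying the two bounds produces exactly the constant $C_d=C_i\sqrt{d}/\sqrt{\mu_h^{\mathrm{min}}}$ from~\eqref{eq:Cd}. There is no genuine obstacle here; the only points needing care are that the right interpolants land in the right spaces for the two orthogonality relations to apply, namely $i_h(\boldsymbol{\varphi}\cdot\boldsymbol{\psi})\in M_h$ for the global orthogonality and $\pi_K\Div\bv_h\in D(K)$ for~\eqref{j3}. Note that the argument uses no approximation property of $i_h$, only its stability, so no smoothness of $\boldsymbol{\varphi}$ or $\boldsymbol{\psi}$ beyond the stated $L^\infty\times L^2$ regularity enters.
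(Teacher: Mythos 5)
Your proof is correct and follows essentially the same route as the paper: subtract the $M_h$-interpolant using the definition of $\widetilde{\vv}_h^{\mathrm{div}}$, insert $\pi_K\Div\bv_h$ cell-wise via~\eqref{j3}, then combine Cauchy--Schwarz, the stability~\eqref{js}, the bound $\|\boldsymbol{\varphi}\cdot\boldsymbol{\psi}\|_0\le\|\boldsymbol{\varphi}\|_{0,\infty}\|\boldsymbol{\psi}\|_0$, estimate~\eqref{eq:kappadiv}, and $\mu_h^{\textrm{min}}\le\mu_K$. You have merely spelled out the details that the paper compresses into ``a generalised H\"older inequality and~\eqref{eq:kappadiv} yield the statement,'' and your identification of exactly where each orthogonality and stability property is needed is accurate.
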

\begin{proof}
	We obtain
	\begin{align*}
	(\Div\bv_h,\boldsymbol{\varphi}\cdot\boldsymbol{\psi}) 
	= \big(\Div\bv_h,\boldsymbol{\varphi}\cdot\boldsymbol{\psi}
	- i_h(\boldsymbol{\varphi}\cdot\boldsymbol{\psi})\big)
	= \sum_{K\in\mathcal{T}_h} \big(\Div\bv_h - \pi_K\Div\bv_h,
	\boldsymbol{\varphi}\cdot\boldsymbol{\psi} - i_h(\boldsymbol{\varphi}\cdot\boldsymbol{\psi})\big)_K
	\end{align*}
	using the properties of $\widetilde{\vv}_h$ and $i_h$,
	in particular~\eqref{j3}. A generalised H\"older inequality
	and~\eqref{eq:kappadiv} yield the statement of this lemma.
\end{proof}

An error estimate for the velocity is given in the next theorem.

\begin{theorem}\label{thm:semi_disc_vel_estimates}
	Let the finite element spaces $\vv_h$ and $Q_h$ satisfy the discrete
	inf-sup condition~\eqref{eq:disc_inf_sup}. Suppose
	assumptions~\ref{assmption_a1}, \ref{assmption_a2}, and $\mu_K\sim 1$
	for all $K\in  \mathcal{T}_h$. Let $(\bu,p)$ be the solution of the
	continuous problem~\eqref{eq:weak_form} and $(\bu_h,p_h)$ be the solution
	of the stabilised semi-discrete problem~\eqref{eq:lps} with the initial
	condition $\bu_{0,h} = j_h\bu_0$. In addition, we assume
	\[
	\bu\in L^2(W^{r+1,\infty}),\quad
	\bu\in L^{\infty}(W^{r,\infty}),\quad
	\bu'\in L^2(H^{r+1}),\quad
	p\in L^2(H^r).
	\]
	Then, the error estimate
	\begin{equation}\label{est:velocity}
	\esssup_{0\le t \le T}\|(\bu-\bu_h)(t)\|_0^2
	+ \int_0^T \big[ \nu \|\nabla (\bu-\bu_h)\|_0^2
	+ S_h(\bu-\bu_h,\bu-\bu_h) \big]
	\le C_{\mathrm{exp}} C(\bu) h^{2r}
	\end{equation}
	holds true where
	\begin{equation}
	\label{eq:cexp}
	C_{\mathrm{exp}} \coloneqq \exp\left(
	2 \|\nabla\bu\|_{L^1(L^{\infty})} + C_d^2 \|\bu\|_{L^2(L^{\infty})}^2
	+ 3 T
	\right)
	\end{equation}
	is a constant depending on $\bu$ and $T$ while
	\begin{align}
	\label{eq:cu}
	C(\bu) \coloneqq C\bigg(
	& \|\bu'\|_{L^2(H^r)}^2
	+ \|\bu\|_{L^\infty(W^{r,\infty})}^2
	+ \|\bu\|_{L^2(W^{r+1,\infty})}
	+ \|p\|_{L^2(H^r)}^2
	\\
	& + \|\bu\|_{L^2(H^r)}^2 \|\bu\|_{L^{\infty}(W^{1,\infty})}^2
	+ \|\bu\|_{L^2(H^{r+1})}^2 \|\bu\|_{L^{\infty}(L^{\infty})}^2
	\nonumber
	\bigg)
	\end{align}
	depends on $\bu$, $p$, $C_{\mathrm{S}}$, $C_d$, $\mu_h^{\textrm{max}}$,
	$\mu_h^{\textrm{min}}$, $d$, and the interpolation constants of $i_h$ and
	$j_h$.
\end{theorem}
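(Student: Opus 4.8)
The plan is to use the standard parabolic error-splitting technique. First I would write $\bu - \bu_h = (\bu - j_h\bu) + (j_h\bu - \bu_h) =: \be + \bx$, where $\be$ is the interpolation error (controlled directly by Assumption~\ref{assmption_a1}, in particular~\eqref{j1}) and $\bx = j_h\bu - \bu_h \in \vv_h$ is the discrete error, which we estimate by an energy argument. The key point is that both $\bx$ and $\be + \bx = \bu - \bu_h$ lie in $\widetilde{\vv}_h^{\mathrm{div}}$: for $\be$ this is~\eqref{discdiv}, and $\bu \in L^2(\vv^{\mathrm{div}})$ while $\bu_h$ satisfies the discretely divergence-free constraint, so the pressure-coupling terms will drop out when we test with $\bx$. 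This is exactly why the final estimate has no pressure norm multiplied by anything viscosity-dependent — the pressure enters only through the interpolation error $p - i_h p$ paired against $\Div\bx$, which vanishes by~\eqref{discdiv} applied in the other direction (or is handled via the inf-sup bound on a suitable correction).

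Next I would derive the error equation. Subtracting~\eqref{eq:lps} from the weak form~\eqref{eq:weak_form} (using that $\bu$ solves the continuous problem and rewriting the convective term of the continuous equation in skew-symmetric form via~\eqref{eq:partint}, legitimate since $\Div\bu = 0$), testing with $\bv_h = \bx$, $q_h = 0$, and using~\eqref{eq:Ah_coer} gives
\begin{equation*}
\frac{1}{2}\frac{d}{dt}\|\bx\|_0^2 + \nu\|\nabla\bx\|_0^2 + S_h(\bx,\bx)
= -(\be',\bx) - \nu(\nabla\be,\nabla\bx) - S_h(j_h\bu,\bx) + (p - i_h p, \Div\bx) + N,
\end{equation*}
where $N$ collects the nonlinear terms $n(\bu,\bu,\bx) - n(\bu_h,\bu_h,\bx)$. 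The stabilization cross-term is bounded by $S_h(j_h\bu,\bx) \le S_h(j_h\bu,j_h\bu)^{1/2} S_h(\bx,\bx)^{1/2}$, and $S_h(j_h\bu,j_h\bu)$ is $O(h^{2r})$ via $\kappa_K$ and~\eqref{j1} (using $\mu_K \sim 1$ and $\kappa_K\nabla(j_h\bu) = \kappa_K\nabla(j_h\bu - \bu)$ since $\kappa_K$ annihilates, or at least nearly annihilates, polynomials — combine~\eqref{kappa} with~\eqref{j1}). The linear terms $(\be',\bx)$ and $\nu(\nabla\be,\nabla\bx)$ are Cauchy--Schwarz plus Young, absorbing $\frac{\nu}{2}\|\nabla\bx\|_0^2$; note the $\nu\|\nabla\be\|_0^2$ contribution must be kept $\nu$-free in the final bound, which works because $\nu \le 1$ in the convection-dominated regime (or one simply keeps it and folds $\nu$ into the constant, since $\nu\|\nabla\be\|_0^2 \le \nu h^{2r}\|\bu\|_{r+1}^2$ and $\nu$ is bounded).

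The main obstacle, as always for Navier--Stokes, is the nonlinear term. Write $N = n(\bu,\bu,\bx) - n(\bu_h,\bu_h,\bx)$ and decompose $\bu - \bu_h = \be + \bx$ so that $N = n(\bu - \bu_h, \bu, \bx) + n(\bu_h, \bu - \bu_h, \bx)$. Using~\eqref{eq:skewprop}, $n(\bu_h, \bx, \bx) = 0$, so the dangerous $n(\bu_h, \bx, \bx)$ piece disappears and we are left with $n(\be + \bx, \bu, \bx) + n(\bu_h, \be, \bx)$. For $n(\bx, \bu, \bx)$ the natural bound is $|((\bx\cdot\nabla)\bu, \bx)| \le \|\nabla\bu\|_{0,\infty}\|\bx\|_0^2$ for the first half of the skew form, while the second half $\frac{1}{2}((\bx\cdot\nabla)\bx, \bu)$ is integrated by parts to $-\frac{1}{2}(\Div\bx, \bx\cdot\bu) - \frac{1}{2}((\bx\cdot\nabla)\bu,\bx)$ wait — more carefully, one writes $\frac{1}{2}((\bx\cdot\nabla)\bx,\bu) = -\frac{1}{2}(\Div\bx,\,\bu\cdot\bx)\cdot\tfrac12\ldots$; the upshot is that the troublesome term is $(\Div\bx, \bu\cdot\bx)$, and here we invoke Lemma~\ref{lem:discdiv} with $\boldsymbol{\varphi} = \bu$, $\boldsymbol{\psi} = \bx$ to get $|(\Div\bx, \bu\cdot\bx)| \le C_d S_h(\bx,\bx)^{1/2}\|\bu\|_{0,\infty}\|\bx\|_0 \le \tfrac14 S_h(\bx,\bx) + C_d^2\|\bu\|_{0,\infty}^2\|\bx\|_0^2$. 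This is precisely where the $C_d^2\|\bu\|_{L^2(L^\infty)}^2$ term in $C_{\mathrm{exp}}$ comes from. The remaining pieces $n(\be, \bu, \bx)$ and $n(\bu_h, \be, \bx)$ are bounded by interpolation estimates and the stability bound $C_{\mathrm{S}}$ from~\eqref{eq:sd_stability_improved} (which controls $\|\nabla\bu_h\|$ in $L^2$), producing the $O(h^{2r})$ consistency terms involving $\|\bu\|_{L^2(H^r)}^2\|\bu\|_{L^\infty(W^{1,\infty})}^2$ and $\|\bu\|_{L^2(H^{r+1})}^2\|\bu\|_{L^\infty(L^\infty)}^2$ seen in~\eqref{eq:cu}. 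Finally, collecting everything into $\frac{d}{dt}\|\bx\|_0^2 + \nu\|\nabla\bx\|_0^2 + S_h(\bx,\bx) \le g(t)\|\bx\|_0^2 + C(\bu)h^{2r}$ with $g \in L^1(0,T)$, $\int_0^T g \le 2\|\nabla\bu\|_{L^1(L^\infty)} + C_d^2\|\bu\|_{L^2(L^\infty)}^2 + 3T$, Gronwall's lemma (with $\bx(0) = 0$ since $\bu_{0,h} = j_h\bu_0$) gives the bound on $\bx$, and the triangle inequality with the interpolation estimate for $\be$ finishes the proof.
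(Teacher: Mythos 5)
Your overall strategy is the paper's: the same splitting $\bu-\bu_h=\be+\bx_h$ with $\be=\bu-j_h\bu$, testing the error equation with $\bx_h$, the same three-way decomposition of the nonlinearity with $n(\bu_h,\bx_h,\bx_h)=0$, Lemma~\ref{lem:discdiv} for the $(\Div\bx_h,\bu\cdot\bx_h)$ term producing the $C_d^2\|\bu\|^2_{L^2(L^\infty)}$ contribution to the Gronwall factor, and the stability constant $C_{\mathrm S}$ to control $n(\bu_h,\be,\bx_h)$. All of that matches.

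There is, however, one genuine gap: your treatment of the pressure term $(p-i_hp,\Div\bx_h)$. It does \emph{not} vanish. Property~\eqref{discdiv} concerns $(\Div(\w-j_h\w),q_h)$ for $q_h\in Q_h$ and only tells you that $(\Div\bx_h,q_h)=0$ for discrete $q_h$; this kills $(i_hp,\Div\bx_h)$ but leaves $(p,\Div\bx_h)\neq 0$, since $\bx_h$ is only discretely divergence-free. An inf-sup correction is not the mechanism either, and the naive bound $\|p-i_hp\|_0\,\|\Div\bx_h\|_0$ forces you to absorb into $\nu\|\nabla\bx_h\|_0^2$, which reintroduces a factor $\nu^{-1}$ in front of $\|p\|^2_{L^2(H^r)}$ and destroys exactly the $\nu$-uniformity the theorem claims. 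The missing idea is the LPS orthogonality~\eqref{j3} of $i_h$ against $D(K)$: write
\begin{equation*}
(p-i_hp,\Div\bx_h)=\sum_{K\in\mathcal{T}_h}(p-i_hp,\kappa_K\Div\bx_h)_K
\le \frac{\sqrt{d}}{\sqrt{\mu_h^{\textrm{min}}}}\,\|p-i_hp\|_0\,S_h(\bx_h,\bx_h)^{1/2},
\end{equation*}
using~\eqref{eq:kappadiv}, and absorb the stabilization factor by Young's inequality. This is what produces the $\nu$-free term $\mu_h^{-\textrm{min}}\,d\,\|p-i_hp\|_0^2\lesssim h^{2r}\|p\|^2_{L^2(H^r)}$ appearing in~\eqref{eq:cu}. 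With this single repair, the rest of your argument goes through as in the paper.
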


\begin{proof}
	Defining $\be \coloneqq  \bu-j_h \bu$ and $\bx_h \coloneqq j_h \bu -
	\bu_h$, the error splitting $\bu -\bu_h = \be + \bx_h$ holds
	for the velocity. After subtracting~\eqref{eq:lps} from~\eqref{eq:weak_form},
	a straightforward  calculation leads for
	$\bx_h\in H^1\big(\vv_{h}^{\Div}\big)$ and $\bv_h\in
	L^2\big(\vv_{h}^{\textrm{div}}\big)$
	to the error equation
	\begin{align}\label{eq:error_equation_sdp}
	(\bx_h', \bv_h) + \nu(\nabla \bx_h,\nabla \bv_h) + S_h(\bx_h,\bv_h)
	=& S_h(j_h\bu, \bv_h) - 
	(\be', \bv_h) - \nu(\nabla \be,\nabla \bv_h)\nonumber\\
	& - n(\bu,\bu,\bv_h)+ n(\bu_h,\bu_h,\bv_h)
	+ (p - i_h p,\Div \bv_h)	
	\end{align}
	where we skip writing the time dependence. With $\bv_h=\bx_h$, the
	Cauchy--Schwarz inequality and Young's inequality give
	\begin{align*}
	\frac{1}{2}\frac{d}{dt} \|\bx_h\|_0^2 + \nu \|\nabla \bx_h\|_0^2
	+ S_h(\bx_h,\bx_h)
	& \le S_h(j_h\bu,j_h\bu) + \frac{1}{4} S_h(\bx_h,\bx_h)
	+ \frac{1}{2} \|\be'\|_0^2 + \frac{1}{2} \|\bx_h\|_0^2\\
	&\qquad + \frac{\nu}{2} \|\nabla\be\|_0^2 + \frac{\nu}{2} \|\nabla\bx_h\|_0^2
	+ \frac{2d}{\mu_h^{\textrm{min}}} \|p-i_h p\|_0^2
	\\
	& \qquad
	+ \frac{1}{8} S_h(\bx_h,\bx_h)
	+\big|n(\bu,\bu,\bx_h)-n(\bu_h,\bu_h,\bx_h)\big|
	\end{align*}
	where we used
	\[
	(p - i_h p,\Div\bx_h) = \sum_{K\in\mathcal{T}_h} (p - i_h p,
	\kappa_K \Div\bx_h)_K \le \frac{\sqrt{d}}{\sqrt{\mu_h^{\textrm{min}}}}
	\|p - i_h p\|_0 \, S_h(\bx_h,\bx_h)^{1/2}
	\]
	due to property~\eqref{j3} of $i_h$ and~\eqref{eq:kappadiv}.
	The difference of the nonlinear terms is decomposed as
	\begin{align*}
	n(\bu,\bu,\bx_h)-n(\bu_h,\bu_h,\bx_h) 
	&= n(\bu-\bu_h,\bu,\bx_h) + n(\bu_h,\bu-\bu_h,\bx_h)\\
	&= n(\be,\bu,\bx_h) + n(\bx_h,\bu,\bx_h)
	+ n(\bu_h,\be,\bx_h)
	\end{align*}
	where $n(\bu_h,\bx_h,\bx_h)=0$ due to~\eqref{eq:skewprop} was used.
	The three nonlinear terms are estimated separately using Young's
	inequality, the representation~\eqref{eq:partint} of the trilinear form
	$n$, and generalised H\"older inequalities. Hence, we have
	\begin{align*}
	\big|n(\be,\bu,\bx_h)\big|
	& \le \|\be\|_0 \|\nabla\bu\|_{L^{\infty}} \|\bx_h\|_0
	+ \frac{1}{2} \|\Div \be\|_0 \|\bu\|_{L^{\infty}} \|\bx_h\|_0\\
	& \le \|\nabla\bu\|_{L^{\infty}}^2 \|\be\|_0^2
	+ \frac{1}{4} \|\bu\|_{L^{\infty}}^2 \|\Div\be\|_0^2
	+ \frac{1}{2} \|\bx_h\|_0^2.
	\end{align*}
	Using Lemma~\ref{lem:discdiv} and Young's inequality, we obtain
	\begin{align*}
	\big|(\Div\bx_h,\bu\cdot\bx_h)\big|
	\le \frac{1}{4}S_h(\bx_h,\bx_h) + C_d^2
	\|\bu\|_{L^{\infty}}^2 \|\bx_h\|_0^2
	\end{align*}
	with $C_d$ from~\eqref{eq:Cd}.
	This results in
	\begin{align*}
	\big|n(\bx_h,\bu,\bx_h)\big| & \le \|\bx_h\|_0^2
	\|\nabla\bu\|_{L^{\infty}}
	+ \frac{1}{2} \big|(\Div\bx_h,\bu\cdot\bx_h)\big|\le \frac{1}{8} S_h(\bx_h,\bx_h) + 
	\left(\frac{C_d^2}{2}\|\bu\|_{L^{\infty}}^2 + \|\nabla\bu\|_{L^{\infty}}
	\right)\|\bx_h\|_0^2.
	\end{align*}
	The third nonlinear term can be estimated as follows
	\begin{align*}
	\big|n(\bu_h,\be,\bx_h)\big|
	& \le \|\bu_h\|_0\|\nabla\be\|_{L^{\infty}}\|\bx_h\|_0
	+ \frac{C_d}{2}S_h(\bu_h,\bu_h)^{1/2} \; \|\be\|_{L^{\infty}} \|\bx_h\|_0\\
	& \le \|\bu_h\|_0^2 \|\nabla\be\|_{L^{\infty}}^2 
	+ \frac{C_d^2}{4}S_h(\bu_h,\bu_h) \|\be\|_{L^{\infty}}^2
	+ \frac{1}{2} \|\bx_h\|_0^2
	\end{align*}
	using Lemma~\ref{lem:discdiv} and Young's inequality since
	$\bu_h\in H^1(\vv_h^{\textrm{div}})$.
	Inserting these estimates and putting similar terms to the left-hand side,
	one gets
	\begin{multline}\label{eq:sd_vel_est_err_eq}
	\frac{1}{2}\frac{d}{dt} \|\bx_h\|_0^2 + \frac{\nu}{2} \|\nabla \bx_h\|_0^2 
	+ \frac{1}{2} S_h(\bx_h,\bx_h)\\
	\begin{aligned}
	&\le S_h(j_h\bu,j_h\bu)+ \frac{1}{2}\|\be'\|_0^2
	+ \frac{\nu}{2} \|\nabla \be\|_0^2
	+ \frac{2d}{\mu_h^{\textrm{min}}}\|p-i_hp\|_0^2\\
	&\qquad + \|\nabla\bu\|_{L^{\infty}}^2 \|\be\|_0^2
	+ \frac{1}{4} \|\bu\|_{L^{\infty}}^2 \|\Div\be\|_0^2
	+ \|\bu_h\|_0^2 \|\nabla\be\|_{L^{\infty}}^2
	\\
	&\qquad
	+ \frac{C_d^2}{4} S_h(\bu_h,\bu_h) \|\be\|_{L^{\infty}}^2
	+ \left(
	\|\nabla\bu\|_{L^{\infty}} + \frac{C_d^2}{2}
	\|\bu\|_{L^{\infty}}^2 +\frac{3}{2} \right) \|\bx_h\|_0^2.
	\end{aligned}
	\end{multline}
	Multiplying by $2$ and integrating the above estimate over $(0,t)$ lead to
	\begin{multline}\label{eq:prf_sd_1}
	\|\bx_h(t)\|_0^2 + \int_0^t \big[\nu \|\nabla \bx_h\|_0^2
	+ S_h(\bx_h,\bx_h)\big]\\
	\begin{aligned}
	& \le  \int_0^t \bigg[ 2 S_h(j_h \bu, j_h \bu)
	+  \|\be'\|_0^2 + \nu \|\nabla \be\|_0^2
	+ \frac{4d}{\mu_h^{\textrm{min}}}\|p-i_hp\|_0^2 \bigg]\\
	& \qquad + \int_0^t \left[
	2\|\nabla\bu\|_{L^{\infty}}^2\|\be\|_0^2
	+ \frac{1}{2} \|\bu\|_{L^{\infty}}^2 \|\Div\be\|_0^2
	\right]\\
	& \qquad + \int_0^t \left[
	2 \|\bu_h\|_0^2 \;\|\nabla \be\|_{L^\infty}^2
	+ \frac{C_d^2}{2} S_h(\bu_h,\bu_h) \|\be\|_{L^{\infty}}^2
	\right]\\
	& \qquad
	+ \int_0^t \left(
	2 \|\nabla\bu\|_{L^{\infty}}
	+ C_d^2 \|\bu\|_{L^{\infty}}^2
	+3 \right) \|\bx_h\|_0^2
	\end{aligned}
	\end{multline}
	where we used $\bx_h(0)=\boldsymbol{0}$ due to the choice $\bu_{0,h} = j_h\bu_0$ of
	the discrete initial condition. Using the $L^2$-stability of the
	fluctuation operator $\kappa_K$, the properties~\eqref{j1}
	of the interpolation operator $j_h$, and the approximation
	property~\eqref{kappa} of $\kappa_K$, we get for the first term on the
	right-hand side of~\eqref{eq:prf_sd_1}
	\begin{equation*}
	\int_0^t S_h(j_h\bu,j_h\bu) 
	\le 2 \int_0^t \big[ S_h(j_h\bu - \bu, j_h\bu - \bu) + S_h(\bu,\bu)\big]
	\le C \,\mu_h^{\textrm{max}}\, h^{2r} \int_0^t \|\bu\|_{r+1}^2.
	\end{equation*}
	The stability estimate \eqref{eq:sd_stability_improved} provides
	\begin{equation*}
	\int_0^t\|\bu_h\|_0^2 \; \|\nabla \be\|_{L^\infty}^2 
	\le \|\bu_h\|_{L^\infty(L^2)}^2 \int_0^t \|\nabla \be\|_{L^\infty}^2 \le 
	2C_\textrm{S} \int_0^t \|\nabla \be\|_{L^\infty}^2
	\end{equation*}
	and
	\begin{equation*}
	\int_0^t S_h(\bu_h,\bu_h)\|\be\|_{L^{\infty}}^2
	\le \|\be\|_{L^{\infty}(L^{\infty})}^2 \int_0^t S_h(\bu_h,\bu_h)
	\le C_\textrm{S} \|\be\|_{L^{\infty}(L^{\infty})}^2.
	\end{equation*}
	In addition, we have
	\begin{align*}
	\int_0^t \|\nabla\bu\|_{L^{\infty}}^2\|\be\|_0^2
	& \le C h^{2r} \|\bu\|_{L^{\infty}(W^{1,\infty})}^2
	\|\bu\|_{L^2(H^r)}^2
	\intertext{and}
	\int_0^t \|\bu\|_{L^{\infty}}^2\|\Div\be\|_0^2
	& \le C h^{2r} \|\bu\|_{L^{\infty}(L^{\infty})}^2
	\|\bu\|_{L^2(H^{r+1})}^2.
	\end{align*}
	Using the above bounds and the approximation properties~\eqref{j1}
	and~\eqref{j2} of the interpolation operators $j_h$ and $i_h$
	in~\eqref{eq:prf_sd_1}, we get
	\begin{equation*}
	\|\bx_h(t)\|_0^2 + \int_0^t \big[ \nu \|\nabla \bx_h\|_0^2 
	+ S_h(\bx_h,\bx_h) \big]
	\le C(\bu) h^{2r} 
	+ \int_0^t \left(
	2 \|\nabla\bu\|_{L^{\infty}} + C_d^2
	\|\bu\|_{L^{\infty}}^2
	+3 \right) \|\bx_h\|_0^2
	\end{equation*}
	with $C(\bu)$ given in~\eqref{eq:cu}. Then, the application of
	Gronwall's lemma leads for almost all $t\in(0,T)$ to
	\begin{equation*}
	\|\bx_h(t)\|_0^2 + \int_0^t \big[\nu \|\nabla \bx_h\|_0^2
	+ S_h(\bx_h,\bx_h) \big] 
	\le C_{\mathrm{exp}} C(\bu) h^{2r}
	\end{equation*}
	with the Gronwall constant $C_{\mathrm{exp}}$ defined
	in~\eqref{eq:cexp}. This estimate, the application of the
	triangle inequality
	\begin{multline*}
	\esssup_{0\le t \le T} \|(\bu-\bu_h)(t)\|_0^2
	+ \int_0^T \big[ \nu \|\nabla (\bu-\bu_h)\|_0^2
	+ S_h(\bu-\bu_h,\bu-\bu_h) \big] \\
	\;\;\le 2\bigg\{ \esssup_{0\le t \le T}\|\be(t)\|_0^2
	+ \int_0^T \big[ \nu \|\nabla \be\|_0^2
	+  S_h(\be,\be) \big] 
	+ \esssup_{0\le t \le T} \|\bx_h(t)\|_0^2
	+ \int_0^T \big[ \nu \|\nabla \bx_h\|_0^2
	+ S_h(\bx_h,\bx_h) \big]
	\bigg\},
	\end{multline*}
	and the approximation properties~\eqref{j1} conclude the proof.
\end{proof}
\section{Time discretization by discontinuous Galerkin method}
\label{sec:time_disc}
We discretise in this section the semi-discrete problem~\eqref{eq:lps} in
time by using discontinuous Galerkin (dG) methods to obtain a fully
discrete LPS/dG formulation of~\eqref{eq:weak_form}. To this end, we consider a
partition $0=t_0<t_1<\cdots<t_N=T$ of the time interval $I=[0,T]$ and set
$I_n\coloneqq  (t_{n-1},t_n]$, $\tau_n\coloneqq t_n-t_{n-1}$, $n=1,\ldots,N$, and
\begin{equation}\label{taumin}
\tau \coloneqq  \max_{1\le n\le N}\tau_n \le 1,\qquad
\tau_{\textrm{min}} \coloneqq  \min_{1\le n\le N}\tau_n.
\end{equation}
For non-negative integers $k$, we
define the fully discrete time-discontinuous velocity and pressure spaces
as follows:
\begin{align*}
\xx_k & \coloneqq  \Big\{ \bv_h\in L^2(I,\vv_h):\bv_h|_{I_n}\in
\mathbb{P}_k (I_n,\vv_h),\;
n=1,\dots,N \Big\},\\
\xx_k^{\textrm{div}} & \coloneqq  \Big\{ \bv_h\in\xx_k
:\bv_h|_{I_n}\in \mathbb{P}_k
(I_n,\vv_h^{\textrm{div}}),\;
n=1,\dots,N \Big\},\\
Y_k & \coloneqq  \Big\{ q_h\in L^2(I,Q_h):q_h|_{I_n}\in
\mathbb{P}_k (I_n,Q_h),\;
n=1,\dots,N \Big\},
\end{align*}
where
\begin{align*}
\mathbb{P}_k(I_n,W_h) \coloneqq  \left\{ w_h :I_n\to W_h: 
w_h(t)=\sum_{j=0}^k W^jt^j,\;
W^j\in W_h, \; j=0,\dots,k \right\}
\end{align*}
denotes the space of $W_h$-valued polynomials of degree less than or
equal to $k$ in time.
For a function $w$ being piecewise smooth in time, we define at $t=t_n$
the left-sided value~$w_n^-$, the right-sided value~$w_n^+$, and the
jump~$[w]_n$ as
\[
w_n^- \coloneqq  \lim_{t\rightarrow t_n-0} w(t),\qquad
w_n^+ \coloneqq  \lim_{t\rightarrow t_n+0} w(t),\qquad
[w]_n \coloneqq w_n^+-w_n^-.
\] 
The discontinuous Galerkin method applied to~\eqref{eq:lps} leads to the 
fully discrete problem\medskip

Find $(\uht,\pht)\in \xx_k\times Y_k$ such that
\begin{multline}\label{eq:fdp_with_jump}
\sum_{n=1}^N \intin \big[(\uht', \vht)
+ A_h\big((\uht, \pht),(\vht, \qht)\big)
+ n(\uht,\uht,\vht)\big]\\
+ \sum_{n=1}^{N-1} \big([\uht]_n, \vht(t_n^+)\big)
+ (\uht(t_0^+), \vht(t_0^+))
= \big( j_h \bu_0,\vht(t_0^+)\big)
+ \int_0^T \big\langle\ff,\vht\big\rangle \qquad 
\end{multline}
\indent for all $\vht\in \xx_k$ and all $\qht\in Y_k$.
\medskip

\noindent Note that the initial condition is enforced only weakly.

In order to evaluate the time integrals in~\eqref{eq:fdp_with_jump}
numerically, the right-sided Gau\ss--Radau
quadrature with $(k+1)$ points will be applied. Let $-1<\hat{t}_1
<\dots < \hat{t}_{k+1}=1$ and $\widehat{\omega}_j$, $j=1,\dots,k+1$, denote the
points and weights of this quadrature formula on the reference time
interval $[-1,1]$. We define on $I_n$, $n=1,\dots, N$, the transformed
quadrature formula $Q_n$ by
\[
\Qn{\varphi} \coloneqq  \frac{\tau_n}{2} \sum_{j=1}^{k+1} \widehat{\omega}_j
\varphi(t_{n,j}),\qquad
t_{n,j}\coloneqq T_n(\hat{t}_j),
\]
where
\begin{equation}
\label{Tn}
T_n:[-1,1]\to \overline{I}_n,\quad \hat{t}\mapsto
t_{n-1}+\frac{\tau_n}{2}(\hat{t}+1),
\end{equation}
is an affine mapping, see~\cite{MS11}. Note that $Q_n$ integrates
polynomials of degree less than or equal to $2k$ exactly. Moreover, $Q_n$
fulfils a Cauchy--Schwarz-like estimate
\begin{equation}
\label{eq:QnCSU}
\Qn{\varphi\psi} \le \Qn{\varphi^2}^{1/2} \Qn{\psi^2}^{1/2}
\end{equation}
for all suitable functions $\varphi$ and $\psi$. Furthermore, we set
\[
\Q{\varphi}\coloneqq \sum_{n=1}^N \Qn{\varphi}
\]
as abbreviation.

Let us introduce the forms $B_n$ and $B_{h,n}$ as 
\begin{align}
B_n\big(\boldsymbol{\varphi};(\bv,q), (\w, r)\big) &\coloneqq 
\Qn{(\bv', \w) + A\big((\bv,q),(\w,r)\big) +
	n(\boldsymbol{\varphi},\bv,\w)}
+ \big(\bv(t_{n-1}^+), \w(t_{n-1}^+)\big)
\label{eq:B}
\intertext{and}
B_{h,n}\big(\boldsymbol{\varphi};(\bv,q), (\w, r)\big) &\coloneqq 
\Qn{(\bv', \w) + A_h\big((\bv,q),(\w,r)\big) +
	n(\boldsymbol{\varphi},\bv,\w)}+ \big(\bv(t_{n-1}^+), \w(t_{n-1}^+)\big).
\label{eq:Bh}
\end{align}
Note that $B_n$ and $B_{h,n}$ are linear with respect to their second and
third arguments.

If the solution $(\bu,p)$ of~\eqref{eq:weak_form} belongs to
$C^1(\vv)\times C(Q)$, we have
\begin{align}\label{eq:new_weak_form}
B_n\big(\bu;(\bu,p),(\bv,q)\big) = \big(\bu(t_{n-1}^-), \bv(t_{n-1}^+)\big)
+ \Qn{\big\langle\ff,\bv\big\rangle}\qquad
\forall (\bv,q)\in L^2(\vv)\times L^2(Q)
\end{align}
where $\bu(t_0^{-}) = \bu_0$.

Since the test functions $\vht \in \xx_k$ and $\qht\in Y_k$ are allowed to
be discontinuous at the discrete time points $t_n$, we can choose their
values on the time intervals $I_n$, $n=1,\dots,N$, independently. By
considering $\vht$ and $\qht$ to vanish outside the time interval $I_n$,
the fully discrete scheme~\eqref{eq:fdp_with_jump} results in a sequence of
local problems on each $I_n$, $1\le n\le N$, which read\medskip

For given $\bu(t_{n-1}^-)$, find $\uht\big|_{I_n}\in\mathbb{P}_k(I_n,\vv_h)$
and $\pht\big|_{I_n} \in \mathbb{P}_k(I_n,Q_h)$ such that 
\begin{equation}\label{eq:fdp_one_form}
B_{h,n}\big(\uht; (\uht,\pht),(\vht,\qht)\big) = 
\big(\uht(t_{n-1}^-), \vht(t_{n-1}^+)\big) +
\Qn{\big\langle\ff,\vht\big\rangle}
\end{equation}
for all $\vht\in \mathbb{P}_k(I_n,\vv_h)$ and $\qht \in
\mathbb{P}_k(I_n,Q_h)$ where $\uht(t^-_0) = j_h \bu_0$.

\subsection{Representation of the fully discrete scheme}
In order to get an algebraic formulation of~\eqref{eq:fdp_one_form}, let
$\widehat{\varphi}_1,\dots,\widehat{\varphi}_{k+1}\in\mathbb{P}_k$
denote the Lagrange basis functions
with respect to the Gau\ss--Radau points $\hat{t}_1,\dots,\hat{t}_{k+1}$ on
$[-1,1]$. Following~\cite{MS11}, we define
\begin{equation}
\label{phinj}
\varphi_{n,j}(t) \coloneqq  \widehat{\varphi}_j\big(T_n^{-1}(t)\big)
\end{equation}
on $I_n$, $n=1,\dots,N$, with $T_n$ from~\eqref{Tn}. Since the restrictions
of $\uht$ and $p_{h,\tau}$ to the interval $I_n$ are $\vv_h$-valued and
$Q_h$-valued polynomials of degree less than or equal to $k$, they can
be represented as
\begin{equation}
\label{fdsol}
\uht\big|_{I_n}(t) \coloneqq  \sum_{j=1}^{k+1} \boldsymbol{U}_{n,h}^j
\varphi_{n,j}(t),\qquad
p_{h,\tau}\big|_{I_n}(t) \coloneqq  \sum_{j=1}^{k+1} P_{n,h}^j \varphi_{n,j}(t)
\end{equation}
with $(\boldsymbol{U}_{n,h}^j, P_{n,h}^j)\in \vv_h\times Q_h$,
$j=1,\ldots,k+1$. 
The choice of the ansatz basis guarantees
\[
\uht(t_{n,j}) = \boldsymbol{U}_{n,h}^j,\qquad
p_{h,\tau}(t_{n,j}) = P_{n,h}^j, \qquad j=1,\ldots,k+1,
\]
with $t_{n,j}=T_n(\hat{t}_j)$, $j=1,\ldots,k+1$. Taking into consideration
that the Gau\ss--Radau formula $Q_n$ is exact for polynomials up to degree
$2k$, the particular choices
\begin{align*}
q_{h,\tau} = 0, \quad \vht
& = \frac{1}{\widehat{\omega}_j}\varphi_{n,j}(t)\bv_h
\text{ with arbitrary } \bv_h\in \vv_h,
\intertext{and}
\vht = \boldsymbol{0}, \quad q_{h,\tau}
& = \frac{1}{\widehat{\omega}_j}\varphi_{n,j}(t)q_h
\text{ with arbitrary } q_h\in Q_h,
\end{align*}
for the test functions lead to the following system of nonlinear
equations:
\medskip

Find the coefficients $(\boldsymbol{U}_{n,h}^j,P_{n,h}^j)\in \vv_h\times Q_h$,
$j=1,\ldots k+1$, such that 
\begin{multline}
\qquad\sum_{j=1}^{k+1} \alpha_{ij} \big( \boldsymbol{U}_{n,h}^j, \bv_h\big) +
\frac{\tau_n}{2}
A_h\big( (\boldsymbol{U}_{n,h}^i,P_{n,h}^i),(\bv_h,q_h)\big)
+ \frac{\tau_n}{2} n(\boldsymbol{U}_{n,h}^i, \boldsymbol{U}_{n,h}^i, \bv_h)\\
= \beta_i \big( \boldsymbol{U}_{n,h}^0,\bv_h\big)
+ \frac{\tau_n}{2} \big\langle\ff(t_{n,i}), \bv_h\big\rangle \qquad
\label{eq:fdp_algebraic}
\end{multline}
for $i=1,\ldots, k+1$ and for all $(\bv_h, q_h)\in \vv_h\times Q_h$ where 
\begin{align*}
\alpha_{ij} \coloneqq  \widehat{\varphi}_j'(\hat{t}_i) + \beta_i \widehat{\varphi}_j(-1),\qquad
\beta_i \coloneqq  \frac{1}{\widehat{\omega}_i}
\widehat{\varphi}_i(-1),
\end{align*}
see~\cite{MS11}. The initial condition $\boldsymbol{U}_{n,h}^0$ on $I_n$
is given by
\[
\boldsymbol{U}_{n,h}^0 \coloneqq 
\begin{cases}
j_h \bu_0, & n=1,\\
\boldsymbol{U}_{n-1,h}^{k+1}, & n>1.
\end{cases}
\]
Note that no initial pressure is required.
\subsection{Velocity estimates: stability and convergence}
In this section, we study the stability properties and the error analysis of
the fully discrete scheme~\eqref{eq:fdp_one_form} with respect to the
velocity. To this end, we exploit the skew-symmetry~\eqref{eq:skewprop} of the
nonlinear term $n$.

Following ideas presented in~\cite{FKNP11}, we define for any function
space $W$ the operators $\pi_n:C(I_n,W)\to\mathbb{P}_k(I_n,W)$,
$n=1,\dots,N$, by
\begin{equation}
\label{eq:lagrange_def}
(\pi_n w)(t_{n,i}) = w(t_{n,i}) \frac{\tau_n}{t_{n,i}-t_{n-1}} = w(t_{n,i})
\widehat{s}_i,\quad
\widehat{s}_i = \frac{2}{\hat{t}_i+1},\qquad i=1,\dots,k+1.
\end{equation}
Hence, $\pi_n w$ is the Lagrange interpolant of the function
$t\mapsto \tau_nw(t)/(t-t_{n-1})$ with respect to the Gau\ss--Radau points
$t_{n,i}\in I_n$, $i=1,\dots,k+1$. Note that $\widehat{s}_1>
\widehat{s}_2>\dots>\widehat{s}_{k+1}=1$. We set
\begin{equation}
\label{eq:lagrange_def2}
\widehat{s}:=\max_{1\le i\le k+1}\widehat{s}_i=\widehat{s}_1
\end{equation}
to shorten some notation.

Provided that $W$ is either a subspace of $L^2(\Omega)$ or
$L^2(\Omega)^d$, the mapping
\begin{equation*}
w\mapsto \|w\|_n \coloneqq \left(
\frac{\tau_n}{2} \sum_{i=1}^{k+1} \widehat{\omega}_i\widehat{s}_i
\|w(t_{n,i})\|_0^2
\right)^{1/2}
\end{equation*}
gives a norm on $\mathbb{P}_k(I_n,W)$ satisfying
\begin{equation}
\label{eq:fs_est1}
\big\|(\pi_n w)(t_{n-1}^+)\big\|_0^2 \le \frac{C_1}{\tau_n} \|w\|_n^2
\qquad\forall w\in\mathbb{P}_k(I_n,W)
\end{equation}
where the fixed constant $C_1$ depends on the polynomial degree $k$ but
is independent of $\tau_n$ and $w\in\mathbb{P}_k(I_n,W)$. Moreover,
we have
\begin{equation}
\label{eq:fs_est2}
\Qn{\|\pi_n w\|_0^2} \le \widehat{s} \|w\|_n^2,
\qquad\
\Qn{\|w\|_0^2} \le \|w\|_n^2\le \widehat{s} \Qn{\|w\|_0^2}
\end{equation}
for all $w\in C(I_n,W)$.
We refer to Lemmata~3 and~5 by~\cite{FKNP11} for details.

The following result provides the stability of the fully discrete velocity. 
\begin{theorem}\label{thm:stab_velocity}
	Let $(\uht,\pht) \in \xx_k \times Y_k$ be the solution of the fully
	discrete scheme~\eqref{eq:fdp_one_form}. Furthermore, we assume $\ff\in
	C(L^2)$. Then, the estimate
	\begin{multline}
	\label{eq:stab_velocity}
	\|\uht(t_n^-)\|_0^2 + 2 \sum_{i=1}^nQ_i\left[\nu\|\nabla\uht\|_0^2
	+ S_h(\uht,\uht)\right] \\
	\le
	\exp(8C_1\,t_n) \left(\|j_h\bu_0\|_0^2
	+ \sum_{i=1}^{n} (1+16\widehat{s}\,\tau_i^2)
	Q_i\left[\|\ff\|_0^2\right]\right)
	\end{multline}
	with $C_1$ from~\eqref{eq:fs_est1} and $\widehat{s}$
	from~\eqref{eq:lagrange_def2} holds true for all $n=1,\ldots,N$.
\end{theorem}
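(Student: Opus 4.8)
The plan is to test the local problem \eqref{eq:fdp_one_form} on each time interval $I_n$ with $(\vht,\qht) = (\uht,\pht)$ and sum over $n=1,\dots,n$, mimicking the energy argument used for the semi-discrete stability Lemma but now carried out with the Gau\ss--Radau quadrature $Q_n$. The key algebraic fact is that the skew-symmetry \eqref{eq:skewprop} kills the nonlinear term pointwise in time, so $Q_n[n(\uht,\uht,\uht)] = 0$; together with the coercivity \eqref{eq:Ah_coer} of $A_h$ this leaves, on each $I_n$,
\begin{equation*}
Q_n\big[(\uht',\uht)\big] + \big(\uht(t_{n-1}^+),\uht(t_{n-1}^+)\big)
+ Q_n\big[\nu\|\nabla\uht\|_0^2 + S_h(\uht,\uht)\big]
= \big(\uht(t_{n-1}^-),\uht(t_{n-1}^+)\big) + Q_n\big[\langle\ff,\uht\rangle\big].
\end{equation*}
First I would handle the "parabolic" terms: since $\uht|_{I_n}\in\mathbb P_k(I_n,\vv_h)$, the product $(\uht',\uht)$ is a polynomial of degree $\le 2k-1$, hence integrated exactly by $Q_n$, so $Q_n[(\uht',\uht)] = \tfrac12\int_{I_n}\tfrac{d}{dt}\|\uht\|_0^2 = \tfrac12\|\uht(t_n^-)\|_0^2 - \tfrac12\|\uht(t_{n-1}^+)\|_0^2$. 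Combining this with the boundary terms and using the elementary identity $a b - \tfrac12 b^2 = \tfrac12 a^2 - \tfrac12(a-b)^2$ produces the telescoping jump structure
\begin{equation*}
\tfrac12\|\uht(t_n^-)\|_0^2 + \tfrac12\|[\uht]_{n-1}\|_0^2 + Q_n\big[\nu\|\nabla\uht\|_0^2 + S_h(\uht,\uht)\big]
= \tfrac12\|\uht(t_{n-1}^-)\|_0^2 + Q_n\big[\langle\ff,\uht\rangle\big],
\end{equation*}
with $\uht(t_0^-) = j_h\bu_0$. Summing from $1$ to $n$ telescopes the first term on each side and drops the (nonnegative) jump contributions.

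The remaining work is to absorb the right-hand side $\sum_{i=1}^n Q_i[\langle\ff,\uht\rangle]$. Here the point of the $\pi_n$-machinery and the norm $\|\cdot\|_n$ becomes essential: a naive Cauchy--Schwarz plus $\nu$-weighted Young step would reintroduce a $C_F^2/\nu$ factor, which we want to avoid. Instead I would bound $\langle\ff,\uht\rangle \le \|\ff\|_0\|\uht\|_0$ pointwise, use the Gau\ss--Radau Cauchy--Schwarz estimate \eqref{eq:QnCSU} to get $Q_i[\langle\ff,\uht\rangle] \le Q_i[\|\ff\|_0^2]^{1/2} Q_i[\|\uht\|_0^2]^{1/2}$, then control $Q_i[\|\uht\|_0^2] \le \|\uht\|_i^2$ by the left inequality in \eqref{eq:fs_est2}, and relate $\|\uht\|_i^2$ back to the controlled quantities via \eqref{eq:fs_est1}: the norm $\|\uht\|_i$ combined with \eqref{eq:fs_est1} lets one estimate the interval-interior $L^2$ mass of $\uht$ in terms of its value at the Radau points, ultimately giving a bound of the form $\|\uht\|_i^2 \lesssim \tau_i\big(\|\uht(t_{i-1}^-)\|_0^2 + \tau_i\, Q_i[\|\ff\|_0^2]\big)$ after feeding the local equation once more (testing with $\pi_i$-type functions as in \cite{FKNP11}). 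Substituting and applying Young's inequality with a small parameter moves a $\tfrac12\|\uht(t_{i-1}^-)\|_0^2$-type term into a form amenable to the discrete Gronwall lemma; the weight $8C_1$ in the exponential and the factor $(1+16\widehat s\,\tau_i^2)$ in front of $Q_i[\|\ff\|_0^2]$ will emerge from tracking the precise constants through these two applications of \eqref{eq:fs_est1}--\eqref{eq:fs_est2} and the Young parameter.

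The main obstacle is precisely this last absorption step: one must bound $\sum_i Q_i[\langle\ff,\uht\rangle]$ without a viscosity-dependent constant and without leaving an unabsorbed multiple of $\|\uht(t_i^-)\|_0^2$ on the right that would break the discrete Gronwall induction. This is where the auxiliary estimates \eqref{eq:fs_est1} and \eqref{eq:fs_est2} on the $\pi_n$-operator and the norm $\|\cdot\|_n$ do the real work — they are exactly the tools that convert pointwise-in-time Radau data into genuine control of $\uht$ on the open interval. I expect the bookkeeping of constants ($C_1$, $\widehat s$, the $\tau_i^2$ scaling) to be the fiddly part, while the structural skeleton — skew-symmetry eliminates the nonlinearity, coercivity gives the dissipation, exact quadrature on $(\uht',\uht)$ gives telescoping, discrete Gronwall closes it — is routine. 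Once the bound for fixed $n$ is in hand, it holds for all $n=1,\dots,N$ by the same argument, completing the proof.
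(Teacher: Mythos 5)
Your proposal is correct and follows essentially the same route as the paper: test with $(\uht,\pht)$ to get the energy identity via skew-symmetry, coercivity, and exact integration of $(\uht',\uht)$, then test a second time with $(\pi_n\uht,\pi_n\pht)$ to obtain the bound $\|\uht\|_n^2\lesssim \tau_n\|\uht(t_{n-1}^-)\|_0^2+\widehat{s}\,\tau_n^2\,Q_n\!\left[\|\ff\|_0^2\right]$ that absorbs the interval-interior $L^2$ mass without any $\nu$-dependence, and close with the discrete Gronwall lemma. The step you flag as the "main obstacle" is exactly what the paper does, with the same use of \eqref{eq:fs_est1} and \eqref{eq:fs_est2}, so only the constant bookkeeping remains to be written out.
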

\begin{proof}
	Setting $(\vht,\qht) = (\uht,\pht)$ in~\eqref{eq:fdp_one_form},
	the exactness of the quadrature rule applied to $(\uht',\uht)$ and
	the application of~\eqref{eq:skewprop} and~\eqref{eq:Ah_coer} result in
	\begin{multline}
	\label{eq:stab_n1}
	\frac{1}{2}\|\uht(t_n^-)\|_0^2 + \Qn{\nu\|\nabla\uht\|_0^2
		+ S_h(\uht,\uht)}\\
	\le \frac{1}{2} \|\uht(t_{n-1}^-)\|_0^2
	+ \frac{1}{2} \Qn{\|\ff\|_0^2} +
	\frac{1}{2} \Qn{\|\uht\|_0^2}.
	\end{multline}
	Setting $(\vht,\qht) = (\pi_n \uht, \pi_n \pht)$
	in~\eqref{eq:fdp_one_form}, we get
	\begin{multline}
	\Qn{(\uht', \pi_n \uht)} + \left(\uht (t_{n-1}^+), \pi_n \uht(t_{n-1}^+)\right) 
	+ \Qn{A_h\big((\uht,\pht), (\pi_n \uht, \pi_n\pht) \big)}\\ 
	+\Qn{n(\uht, \uht, \pi_n \uht)}
	= \left(\uht(t_{n-1}^-), \pi_n \uht(t_{n-1}^+)\right)
	+ \Qn{(\ff, \pi_n \uht)}.
	\label{eq:stab_n2}
	\end{multline}
	The nonlinear term $n$ vanishes in each quadrature point of $Q_n$ due
	to~\eqref{eq:lagrange_def} and the skew-symmetric
	property~\eqref{eq:skewprop}. Using the exactness of the quadrature
	formula, the first two terms on the left-hand side
	of~\eqref{eq:stab_n2} are bounded from below by
	\begin{align*}
	\frac{1}{2}\left(\|\uht(t_{n}^-)\|_0^2
	+ \frac{1}{\tau_n} \|\uht\|_n^2\right)
	\le \Qn{(\uht',\pi_n \uht)} + \left(\uht(t_{n-1}^+), \pi
	\uht(t_{n-1}^+)\right),
	\end{align*}
	see~(4.24) in~\cite{FKNP11} for more details.
	Since $\widehat{s}_i \ge 1$, the use of~\eqref{eq:lagrange_def}
	and the coercivity~\eqref{eq:Ah_coer} give for the third term on the
	left-hand side of~\eqref{eq:stab_n2} the estimate
	\begin{align*}
	\Qn{A_h\big((\uht,\pht), (\pi_n \uht, \pi_n\pht) \big)}
	&= \frac{\tau_n}{2} \sum_{j=1}^{k+1} \widehat{\omega}_j \widehat{s}_j
	A_h\big((\uht(t_{n,j}),\pht(t_{n,j})),
	(\uht(t_{n,j}),\pht(t_{n,j}))\big)\\
	&\ge \Qn{\nu \|\nabla \uht\|_0^2 + S_h(\uht,\uht)}\ge 0.
	\end{align*}
	Inserting these estimates into~\eqref{eq:stab_n2} and using
	Cauchy--Schwarz' and Young's inequalities as well as~\eqref{eq:fs_est1}
	and~\eqref{eq:fs_est2} for the right-hand side of~\eqref{eq:stab_n2}, we get 
	\begin{align*}
	\frac{1}{2}\|\uht(t_{n}^-)\|_0^2
	+ \frac{1}{2\tau_n} \|\uht\|_n^2
	&\le \|\uht(t_{n-1}^-)\|_0 \; \|\pi_n \uht(t_{n-1}^+)\|_0
	+ \Qn{\|\ff\|_0^2}^{1/2}\Qn{\|\pi_n\uht\|_0^2}^{1/2}\\
	&\le C_1 \|\uht(t_{n-1}^-)\|_0^2
	+ \frac{1}{4\tau_n} \|\uht\|_n^2
	+ 2\widehat{s}\,\tau_n \Qn{\|\ff\|_0^2}
	+ \frac{1}{8\tau_n} \|\uht\|_n^2.
	\end{align*}
	Putting the terms with $\|\uht\|_n^2$ to the left-hand side and skipping
	non-negative contributions there, we arrive at
	\begin{align*}
	\frac{1}{8\tau_n} \|\uht\|_n^2
	\le C_1 \|\uht(t_{n-1}^-)\|_0^2
	+ 2\widehat{s}\, \tau_n \Qn{\|\ff\|_0^2}
	\end{align*}
	which by~\eqref{eq:fs_est2} gives
	\begin{align*}
	\Qn{\|\uht\|_0^2} \le \|\uht\|_n^2 \le
	8 C_1\tau_n \|\uht(t_{n-1}^-)\|_0^2 +
	16 \widehat{s}\, \tau_n^2 \Qn{\|\ff\|_0^2}.
	\end{align*}
	Inserting this in \eqref{eq:stab_n1} leads to 
	\begin{equation*}
	\frac{1}{2}\|\uht(t_n^-)\|_0^2 + \Qn{\nu\|\nabla\uht\|_0^2
		+ S_h(\uht,\uht)}
	\le  \frac{1}{2}\big(1 + 8 C_1 \tau_n \big) \|\uht(t_{n-1}^-)\|_0^2
	+ \frac{1}{2} \big(1+16\widehat{s}\,\tau_n^2\big) \Qn{\|\ff\|_0^2}.
	\end{equation*}
	The application of a discrete version of the Gronwall lemma to this
	estimate concludes the proof.
\end{proof}

To prepare an error estimate for the  velocity, more notation is needed.
Let $w$ be a time-continuous function. We define its Gau\ss--Radau
interpolant $\widetilde{w}$ as
\[
\widetilde{w}|_{I_n}(t) \coloneqq  \sum_{j=1}^{k+1} w(t_{n,j}) \varphi_{n,j}(t),
\]
with $\varphi_{n,j}$ given in~\eqref{phinj}. Moreover, we set
$\widetilde{w}_0^- \coloneqq  w_0^-$.
Note that $\widetilde{\bu}$ and $\widetilde{p}$ will be on each time interval $I_n$,
$n=1,\dots,N$, polynomials of degree less than or equal to $k$ with values
in $\vv$ and $Q$ which coincide with $\bu$ and $p$ in all quadrature points
$t_{n,i}$. Furthermore, we define $w^I$ on each $I_n$
as the Lagrange interpolant of $w$ with respect to the nodes
$t_{n-1},t_{n,1},\dots,t_{n,k+1}$. Hence, $w^I$ is a time-continuous,
piecewise polynomial of degree less than or equal to $k+1$.

Using multiple times that the interpolants $\widetilde{\w}$ and $\w^I$
coincide in all quadrature points $t_{n,i}$, integration by parts in time
and the exactness of the quadrature rule $Q_n$ for polynomials of degree
less than or equal to $2k$, we obtain
\begin{multline}\label{eq:rel_interpl}
\Qn{\big((\w^I-\widetilde{\w})',\vht\big)}
= \intin \big((\w^I-\widetilde{\w})',\vht\big)\\
\begin{aligned}
& = - \intin (\w^I - \widetilde{\w}, \vht')
+ \big((\w^I-\widetilde{\w})(t_n^-), \bv_n^-\big)  
- \big((\w^I-\widetilde{\w})(t_{n-1}^+), \bv_{n-1}^+\big)\\
& = -\Qn{\big(\w^I-\widetilde{\w},\vht'\big)}
- \big(\widetilde{\w}(t_{n-1}^-) - \widetilde{\w}(t_{n-1}^+),
\bv_{n-1}^+\big)\\
& = \big([\widetilde{\w}]_{n-1}, \bv_{n-1}^+\big)
\end{aligned}
\end{multline}
for all $\vht\in \xx_k$.

Moreover, the standard interpolation theory leads to the error estimates
\begin{align}\label{t3}
\sup_{0\le t\le T}|w^{(i)}(t)-\widetilde{w}^{(i)}(t) |_{j,p}
&\le C\tau^{k+1-i}\sup_{0\le t\le T}|w^{(k+1)}(t)|_{j,p},\\ 
\intin|w^{(i)}(t)-\widetilde{w}^{(i)}(t)|_{j,p}^2 
&\le C\tau_n^{2(k+1-i)} \intin|w^{(k+1)}(t)|_{j,p}^2,\label{t4}\\
\sup_{0\le t\le T} |\bu(t) - \bu^I(t)|_{j,p}
&\le C\tau^{k+2} \sup_{0\le t\le T} |\bu^{(k+2)}(t)|_{j,p}\label{t5}
\end{align}
with $i,j=0,1$ and $p\in[1,\infty]$.

\begin{assumption}\label{reg:assumpt}
	The a priori error analysis below assumes
	\begin{equation*}
	\bu \in C^1\big(H^{r+1}(\Omega)^d\big) \cap C^{k+2}\big(H^1(\Omega)^d\big)
	\quad \textrm{and} \quad
	p \in C\big(H^r(\Omega)\big) \cap H^{k+1}\big(H^1(\Omega)\big)
	\end{equation*}
	for the solution $(\bu,p)$ of the Navier-Stokes
	equations~\eqref{eq:weak_form}.
\end{assumption}
The subsequent analysis is based on exploiting properties of discretely
divergence-free functions. We define
\begin{alignat*}{4}
\eht & \coloneqq \bu-\uht,  \qquad & \bxt & \coloneqq  j_h\tu - \uht, \qquad & \be_\tau & \coloneqq  \tu - \bu, \qquad &
\beh & \coloneqq  j_h\bu - \bu,\\
\rhoht & \coloneqq p-\pht, \qquad &
\tht & \coloneqq  i_h\tp - \pht, \qquad & \phit & \coloneqq  \tp - p, \qquad &
\phih & \coloneqq  i_h p - p,
\end{alignat*}
where $\bxt\in\xx_k^{\textrm{div}}$ and $\tht\in Y_k$ are fully
discrete velocity and pressure functions, respectively. Furthermore,
the error splittings
\begin{alignat}{2}
j_h\tu - \bu  &= j_h\be_\tau + \beh,\qquad &
i_h \tp- p  &= i_h\phit + \phih,
\label{eq:error_splitting}\\
\bu-\uht & = \bxt-j_h \bet - \beh, \qquad & p-\pht & = \tht - i_h \phit -
\phih\nonumber
\end{alignat} 
hold true. The error equation
\begin{align}\label{eq:error_equation}
\Qn{(\bxt', \vht)} &+ \Qn{A_h\big((\bxt,\tht), (\vht,\qht)\big)} + \left([\bxt]_{n-1}, \vht(t_{n-1}^+)\right) \nonumber\\
&= \Qn{(j_h\tu'-\bu', \vht)} + \Qn{A_h\big((j_h\tu-\bu, i_h \tp- p), (\vht,\qht)\big)}\\
&\qquad  +\Qn{S_h(\bu, \vht)}  + \left([j_h\tu-\bu]_{n-1}, \vht(t_{n-1}^+)\right)\nonumber \\
&\qquad\qquad - \Qn{n\left(\bu, \bu, \vht\right) - n\left(\uht, \uht, \vht\right)}\nonumber
\end{align}
is obtained by using the fully discrete problem \eqref{eq:fdp_one_form}
and property~\eqref{eq:new_weak_form} of the continuous problem.

The difference of the nonlinear terms is estimated as follows: 
\begin{lemma}\label{lem:nonlinear_estimate}
	Let $(\bu,p)$ with $\bu|_{I_n}\in C(I_n,W^{1,\infty})$ and $(\uht,\pht)$
	be the solutions of~\eqref{eq:weak_form} and~\eqref{eq:fdp_one_form},
	respectively. Furthermore, let
	$\vht\in\mathbb{P}_k(I_n,\vv_h^{\textrm{div}})$ with $\vht(t_{n,i}) =
	\gamma_i\bxt(t_{n,i})$, $i=1,\dots,k+1$, where $\gamma_i\in\mathbb{R}$ are
	positive constants such that
	\begin{equation}\label{eq:gammai}
	\max_{1\le i\le k+1}\gamma_i^{-1}\le 1.
	\end{equation}
	Then, there exists a positive constant $C$ depending only on $j_h$
	such that the estimate
	\begin{multline}\label{eq:nonlinear_estimate}
	\big|\Qn{n\left(\bu, \bu, \vht\right) - n\left(\uht, \uht, \vht
		\right)}\big| \\
	\le C \Qn{ \|\bu - j_h \bu \|_1^2} \|\bu\|_{C(I_n,W^{1,\infty})}^2
	+ C_n(\bu) \Qn{\|\vht\|_0^2} + \frac{1}{8} \Qn{S_h(\bxt,\bxt)}
	\end{multline}
	holds true where
	\begin{align}\label{eq:C1}
	C_n(\bu) \coloneqq 2 + \frac{C_d^2}{2}
	\|j_h\bu\|_{C(I_n,L^{\infty})}^2
	+ \|\nabla j_h\bu\|_{C(I_n,L^{\infty})}
	\end{align}
	depends on $\|\bu\|_{C(I_n,W^{1,\infty})}$ only.\\
\end{lemma}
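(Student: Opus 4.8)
The plan is to reproduce, inside the quadrature rule $\Qn{\cdot}$, the handling of the nonlinear terms from the proof of Theorem~\ref{thm:semi_disc_vel_estimates}. Since $Q_n$ only evaluates its argument at the Gau\ss--Radau nodes $t_{n,i}$, and $\tu$ interpolates $\bu$ there, at every node one has $j_h\tu(t_{n,i})=j_h\bu(t_{n,i})$, hence $\uht(t_{n,i})=j_h\bu(t_{n,i})-\bxt(t_{n,i})$, $\bu(t_{n,i})=j_h\bu(t_{n,i})-\beh(t_{n,i})$ and $(\bu-\uht)(t_{n,i})=(\bxt-\beh)(t_{n,i})$. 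Using trilinearity of $n$ I would split, at each node, $n(\bu,\bu,\vht)-n(\uht,\uht,\vht)=n(\bxt-\beh,\bu,\vht)+n(\uht,\bxt-\beh,\vht)$, observe that $n(\uht,\bxt,\vht)=0$ at each node because $\vht(t_{n,i})=\gamma_i\bxt(t_{n,i})$ and $n(\bv,\w,\w)=0$ by~\eqref{eq:skewprop}, and then substitute $\uht=j_h\bu-\bxt$ and $\bu=j_h\bu-\beh$ into the remaining pieces. The two copies of $n(\bxt,\beh,\vht)$ cancel, leaving $\Qn{n(\bu,\bu,\vht)-n(\uht,\uht,\vht)}=\Qn{n(\bxt,j_h\bu,\vht)}-\Qn{n(\beh,\bu,\vht)}-\Qn{n(j_h\bu,\beh,\vht)}$, i.e.\ the exact analogues of the three terms of the semi-discrete argument, with $\uht$ replaced by $j_h\bu$ in the last slot.

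Next I would estimate the three contributions nodewise using the representation~\eqref{eq:partint} of $n$, generalised H\"older and Young inequalities, and sum against the positive Gau\ss--Radau weights. Writing $n(\bxt,j_h\bu,\vht)=((\bxt\cdot\nabla)j_h\bu,\vht)+\tfrac12(\Div\bxt,j_h\bu\cdot\vht)$, the convective part is at most $\|\nabla j_h\bu\|_{0,\infty}\|\bxt\|_0\|\vht\|_0$, and the key observation is that $\|\bxt(t_{n,i})\|_0=\gamma_i^{-1}\|\vht(t_{n,i})\|_0\le\|\vht(t_{n,i})\|_0$ by~\eqref{eq:gammai}, so this is bounded by $\|\nabla j_h\bu\|_{C(I_n,L^\infty)}\Qn{\|\vht\|_0^2}$; for the divergence part I would apply Lemma~\ref{lem:discdiv} (legitimate since $\bxt(t)\in\vv_h^{\mathrm{div}}\subset\widetilde{\vv}_h^{\mathrm{div}}$) and Young's inequality to obtain $\tfrac18\Qn{S_h(\bxt,\bxt)}+\tfrac{C_d^2}{2}\|j_h\bu\|_{C(I_n,L^\infty)}^2\Qn{\|\vht\|_0^2}$. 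For $n(\beh,\bu,\vht)$, a direct use of~\eqref{eq:partint}, H\"older (with $\|\Div\beh\|_0\le\sqrt d\,|\beh|_1$) and Young gives $C\|\bu\|_{C(I_n,W^{1,\infty})}^2\Qn{\|\beh\|_1^2}+\Qn{\|\vht\|_0^2}$.

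The one point needing care is the divergence part of $n(j_h\bu,\beh,\vht)=((j_h\bu\cdot\nabla)\beh,\vht)+\tfrac12(\Div j_h\bu,\beh\cdot\vht)$: applying Lemma~\ref{lem:discdiv} to $j_h\bu$ would produce $S_h(j_h\bu,j_h\bu)$, which is not allowed on the right-hand side. Instead I would use that the exact velocity is divergence free, $\Div\bu=0$, so $\Div j_h\bu=\Div(j_h\bu-\bu)=\Div\beh$ and hence $|(\Div j_h\bu,\beh\cdot\vht)|\le\sqrt d\,|\beh|_1\|\beh\|_{0,\infty}\|\vht\|_0$; together with the stability~\eqref{jstab} of $j_h$ (which bounds $\|\beh\|_{0,\infty}$, $\|j_h\bu\|_{0,\infty}$ and $\|\nabla j_h\bu\|_{0,\infty}$ by $C\|\bu\|_{1,\infty}$) and Young's inequality, the term $n(j_h\bu,\beh,\vht)$ contributes $C\|\bu\|_{C(I_n,W^{1,\infty})}^2\Qn{\|\beh\|_1^2}+\Qn{\|\vht\|_0^2}$.

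Adding the three estimates, the coefficients of $\Qn{\|\vht\|_0^2}$ sum to $\|\nabla j_h\bu\|_{C(I_n,L^\infty)}+\tfrac{C_d^2}{2}\|j_h\bu\|_{C(I_n,L^\infty)}^2+2=C_n(\bu)$, the $S_h$-contribution is $\tfrac18\Qn{S_h(\bxt,\bxt)}$, and since $\beh=j_h\bu-\bu$ the remaining term equals $C\,\Qn{\|\bu-j_h\bu\|_1^2}\,\|\bu\|_{C(I_n,W^{1,\infty})}^2$; by~\eqref{jstab}, $C_n(\bu)$ is then controlled by $\|\bu\|_{C(I_n,W^{1,\infty})}$ alone, as claimed. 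I expect the main obstacle to be precisely this bookkeeping of the divergence terms — recognising that $\Div\bu=0$ (not Lemma~\ref{lem:discdiv}) must be used for the $j_h\bu$ factor of the third term while Lemma~\ref{lem:discdiv} is the right tool for the only discretely divergence-free $\bxt$ of the first term — together with tracking the Young constants so that exactly $\tfrac18$ and $C_n(\bu)$ come out.
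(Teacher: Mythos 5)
Your proposal is correct and follows essentially the same route as the paper: the same three-term decomposition $n(\bu-j_h\bu,\bu,\vht)+n(\bxt,j_h\bu,\vht)+n(j_h\bu,\bu-j_h\bu,\vht)$ after killing $n(\uht,\bxt,\vht)$ via $\vht(t_{n,i})=\gamma_i\bxt(t_{n,i})$, Lemma~\ref{lem:discdiv} with $\gamma_i^{-1}\le 1$ for the middle term, and generalised H\"older/Young for the other two. The only cosmetic deviation is in the divergence part of the third term, where the paper simply places the $L^\infty$ norm on $\Div j_h\bu$ and invokes the stability~\eqref{jstab} rather than using $\Div\bu=0$; both work.
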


\begin{proof}
	Having in mind that $\bu$ and $\widetilde{\bu}$ coincide in all quadrature
	points, we split the difference of the nonlinear terms as follows
	\begin{multline*}
	\Qn{n\left(\bu, \bu, \vht \right) - n\left(\uht, \uht, \vht \right)}\\
	\begin{aligned}
	&=\Qn{n\left(\bu-\uht, \bu,  \vht \right)
		+ n\left(\uht,\bu-\uht, \vht \right)} \\
	& = \Qn{n\left(\bu - j_h \bu,\bu,\vht\right)}
	+ \Qn{n\left(\bxt,j_h\bu,\vht\right)}
	+ \Qn{n\left(j_h\bu,\bu - j_h\bu,\vht\right)}
	\end{aligned}
	\end{multline*}
	where we used
	\[
	n\big(\uht(t_{n,i}),\bxt(t_{n,i}),\vht(t_{n,i})\big) =
	\gamma_i n\big(\uht(t_{n,i}),\bxt(t_{n,i}),\bxt(t_{n,i})\big) = 0
	\]
	due to $\vht(t_{n,i})=\gamma_i\bxt(t_{n,i})$ and~\eqref{eq:skewprop}.
	Applying generalised H\"older's inequalities followed by Young's
	inequalities, we get with~\eqref{eq:partint} and~\eqref{eq:QnCSU}
	\begin{align*}
	\Big|\Qn{n\left(\bu - j_h\bu,\bu,\vht\right)}\Big|
	& \le \Qn{\|\bu - j_h\bu\|_0 \|\nabla\bu\|_{L^{\infty}} \|\vht\|_0}
	+ \frac{1}{2} \Qn{\|\Div(\bu - j_h\bu)\|_0 \|\bu\|_{L^{\infty}}
		\|\vht\|_0}\\
	& \le \frac{1}{2} \Qn{\|\bu-j_h\bu\|_0^2}
	\|\nabla\bu\|_{C(I_n,L^{\infty})}^2
	+ \frac{1}{8} \Qn{\|\Div(\bu-j_h\bu)\|_0^2}
	\|\bu\|_{C(I_n,L^{\infty})}^2\\
	&\qquad+ \Qn{\|\vht\|_0^2}.
	\end{align*}
	The choice of $\vht$, Lemma~\ref{lem:discdiv}, conditions~\eqref{eq:gammai}
	and~\eqref{eq:partint}, and Young's inequality yield the bound
	\begin{align*}
	\Big|\Qn{n(\bxt,j_h\bu,\vht)}\Big|
	\le \frac{1}{8} \Qn{S_h(\bxt,\bxt)}
	+ \left(\frac{C_d^2}{2}
	\|j_h\bu\|_{C(I_n,L^{\infty})}^2
	+ \|\nabla j_h\bu\|_{C(I_n,L^{\infty})}\right)
	\Qn{\|\vht\|_0^2},
	\end{align*}
	The third nonlinear term is estimated similarly to the first
	one. We obtain	
	\begin{align*}
	\Big|\Qn{n\left(j_h\bu,\bu - j_h\bu,\vht\right)}\Big|
	& \le \Qn{\|j_h\bu\|_{L^{\infty}} \|\nabla(\bu - j_h\bu)\|_0
		\|\vht\|_0}\\
	& \qquad +\frac{1}{2}\Qn{ \,\|\Div j_h\bu\|_{L^{\infty}} 
		\|\bu - j_h\bu\|_0 \|\vht\|_0}\\
	& \le \frac{1}{2}\Qn{\|\nabla(\bu - j_h\bu\|_0^2}
	\|j_h\bu\|_{C(I_n,L^{\infty})}^2\\
	& \qquad + \frac{1}{8}\Qn{\|\bu - j_h\bu\|_0^2}
	\|\Div j_h\bu\|_{C(I_n,L^{\infty})}^2
	+\Qn{\|\vht\|_0^2}. 
	\end{align*}
	The statement of this lemma follows by collecting the above statements
	and applying~\eqref{j1} for estimating in the last inequality the terms
	involving $j_h\bu$.
\end{proof}

We define
\begin{align}
\label{eq:En}
E_n  \coloneqq \sqrt{\tau_n} \;\Bigg\{ 
\tau_n^{k+1} \|\bu\|_{C^{k+2}(I_n,H^1)}
+ h^r \Big(\|\bu\|_{C^1(I_n,H^{r+1})}
\big(\|\bu\|_{C(I_n,W^{1,\infty})} + 1 + \sqrt{\nu}\big)
+ \|p\|_{C(I_n,H^r)} \Big)
\Bigg\}
\end{align}
to shorten the notation of our error estimates.

\begin{lemma}\label{lem:fdp_disc_velo_est}
	Let the spaces $\vv_h$ and $Q_h$ satisfy the discrete inf-sup
	condition~\eqref{eq:disc_inf_sup}. Suppose
	assumptions~\ref{assmption_a1}, \ref{assmption_a2}, and $\mu_K \sim 1$
	for all $K\in \mathcal{T}_h $. 
	Let $(\bu,p)$ and $(\uht,\pht)$ be the solutions of the continuous 
	problem~\eqref{eq:weak_form} and the fully discrete
	problem~\eqref{eq:fdp_one_form}, respectively. Furthermore, assume that 
	the solution $(\bu,p)$ satisfies the regularity
	assumption~\ref{reg:assumpt}. Then, the estimate
	\begin{multline}
	\|\bxt(t_n^-)\|_0^2
	+  \|[\bxt]_{n-1}\|_0^2 +
	\Qn{\nu \|\nabla \bxt\|_0^2 + S_h(\bxt,\bxt)} + \Qn{\|\bxt\|_0^2}\\
	\le \|\bxt(t_{n-1}^-)\|_0^2
	+ C\, E_n^2 + 2\big(1+ C_n(\bu)\big) \Qn{\|\bxt\|_0^2}
	\label{eq:fdp_disc_velo_est}
	\end{multline}
	holds true where $C_n(\bu)$ is given in~\eqref{eq:C1}. The constant $C$ is
	independent of $\tau$, $h$, and $\nu$.
\end{lemma}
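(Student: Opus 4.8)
The plan is to work on a single slab $I_n$ and test the error equation~\eqref{eq:error_equation} with the discrete error itself, $(\vht,\qht)=(\bxt,\tht)$. By the coercivity~\eqref{eq:Ah_coer} the diagonal term becomes $\Qn{\nu\|\nabla\bxt\|_0^2+S_h(\bxt,\bxt)}$, and since $(\bxt',\bxt)$ is a polynomial of degree $\le 2k$ in time, $Q_n$ integrates it exactly, so $\Qn{(\bxt',\bxt)}=\tfrac12\|\bxt(t_n^-)\|_0^2-\tfrac12\|\bxt(t_{n-1}^+)\|_0^2$; adding the jump term $([\bxt]_{n-1},\bxt(t_{n-1}^+))$ and using $\tfrac12\|a\|_0^2-(b,a)=\tfrac12\|a-b\|_0^2-\tfrac12\|b\|_0^2$ with $a=\bxt(t_{n-1}^+)$, $b=\bxt(t_{n-1}^-)$ produces $\tfrac12\|\bxt(t_n^-)\|_0^2+\tfrac12\|[\bxt]_{n-1}\|_0^2-\tfrac12\|\bxt(t_{n-1}^-)\|_0^2$ on the left. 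The term $\Qn{\|\bxt\|_0^2}$ in~\eqref{eq:fdp_disc_velo_est} is then obtained either by additionally testing with the scaled Lagrange interpolant $(\pi_n\bxt,\pi_n\tht)$ and combining the two identities as in the proof of Theorem~\ref{thm:stab_velocity} (using~\eqref{eq:fs_est1}, \eqref{eq:fs_est2} and $\widehat{s}_i\ge 1$), or simply by adding $\Qn{\|\bxt\|_0^2}$ to both sides at the end; with careful bookkeeping of Young's constants the resulting factor is exactly $2(1+C_n(\bu))$. What is left is to bound the right-hand side of~\eqref{eq:error_equation}.

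The key structural point for the consistency terms is that $\bet=\tu-\bu$ and $\phit=\tp-p$, and hence $j_h\bet$ and $i_h\phit$ as well, vanish at every Gau\ss--Radau point, so inside $Q_n$ only the spatial interpolation errors $\beh=j_h\bu-\bu$ and $\phih=i_hp-p$ survive. Thus $\Qn{A_h\big((j_h\tu-\bu,\,i_h\tp-p),(\bxt,\tht)\big)}$ collapses to $\Qn{\nu(\nabla\beh,\nabla\bxt)-(\phih,\Div\bxt)+S_h(\beh,\bxt)}$, the divergence term tested against $\tht$ vanishing because $j_h$ preserves the discrete divergence~\eqref{discdiv} and $\bu$ is divergence free. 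I would bound $(\phih,\Div\bxt)$ via~\eqref{j3} and~\eqref{eq:kappadiv} by $\tfrac{2d}{\mu_h^{\textrm{min}}}\|\phih\|_0^2+\tfrac18 S_h(\bxt,\bxt)$, combine $\Qn{S_h(\beh,\bxt)}$ with the separate right-hand term $\Qn{S_h(\bu,\bxt)}$ into $\Qn{S_h(j_h\bu,\bxt)}$ and estimate it with a Cauchy--Schwarz inequality for $S_h$ together with~\eqref{j1} and the fluctuation bound~\eqref{kappa}, and treat $\Qn{\nu(\nabla\beh,\nabla\bxt)}$ by Young's inequality. Every $\bxt$-contribution created here is absorbed into the coercivity terms on the left, while the data-dependent remainders are of order $h^r\big(\|\bu\|_{C^1(I_n,H^{r+1})}(1+\sqrt{\nu})+\|p\|_{C(I_n,H^r)}\big)$ inside the square root, i.e.\ a part of $E_n$; in particular the factor $\sqrt{\nu}$ in~\eqref{eq:En} stems precisely from the term $\nu(\nabla\beh,\nabla\bxt)$.

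The delicate term is the temporal consistency contribution $\Qn{\big((j_h\tu-\bu)',\bxt\big)}$ paired with the right-hand jump $([j_h\tu-\bu]_{n-1},\bxt(t_{n-1}^+))$, because the Gau\ss--Radau interpolant $\tu$ has degree only $k$, so its time-derivative error is merely $O(\tau_n^{k})$ --- one order short of what~\eqref{eq:En} asks. The main obstacle will be to recover that order, and I would do so by inserting the degree-$(k+1)$ Lagrange interpolant $\bu^I$ and writing $j_h\tu-\bu=(j_h\tu-j_h\bu^I)+(j_h\bu^I-\bu^I)+(\bu^I-\bu)$: the interpolant-difference identity~\eqref{eq:rel_interpl} applied to $\w=j_h\bu$ rewrites $\Qn{\big((j_h\tu-j_h\bu^I)',\bxt\big)}$ as $-([j_h\tu]_{n-1},\bxt(t_{n-1}^+))$, which --- since $\bu\in C^1(\vv)$ is continuous in time, and with the convention $\tu_0^-=\bu_0$, $\uht(t_0^-)=j_h\bu_0$ at $n=1$ --- cancels the right-hand jump term exactly. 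The two remaining pieces $\Qn{\big(((j_h-I)\bu^I)',\bxt\big)}$ and $\Qn{\big((\bu^I-\bu)',\bxt\big)}$ are controlled by Cauchy--Schwarz and Young, the spatial estimate~\eqref{j1} applied to $(\bu^I)'$, and the derivative version of the temporal interpolation estimate~\eqref{t5} (which gives $O(\tau_n^{k+1})$), yielding the $\tau_n^{k+1}\|\bu\|_{C^{k+2}(I_n,H^1)}$- and $h^r\|\bu\|_{C^1(I_n,H^{r+1})}$-parts of $E_n$ together with at most $\tfrac12\Qn{\|\bxt\|_0^2}$. The nonlinear difference is handled directly by Lemma~\ref{lem:nonlinear_estimate} with $\gamma_i=1$ (or $\gamma_i=\widehat{s}_i$ when testing with $\pi_n\bxt$, which is why that lemma is stated with general $\gamma_i$), producing $C\Qn{\|\bu-j_h\bu\|_1^2}\|\bu\|_{C(I_n,W^{1,\infty})}^2=O(E_n^2)$, the term $C_n(\bu)\Qn{\|\bxt\|_0^2}$, and $\tfrac18\Qn{S_h(\bxt,\bxt)}$. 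Collecting everything, absorbing the $\nu\|\nabla\bxt\|_0^2$- and $S_h(\bxt,\bxt)$-pieces on the left, multiplying by two, adding $\Qn{\|\bxt\|_0^2}$ to both sides and renaming constants yields~\eqref{eq:fdp_disc_velo_est}.
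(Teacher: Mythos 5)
Your proposal is correct and follows essentially the same route as the paper: test the error equation with $(\bxt,\tht)$, use quadrature exactness and the jump identity for the temporal term, exploit that $\bet$ and $\phit$ vanish at the Gau\ss--Radau points, invoke~\eqref{eq:rel_interpl} to convert the $j_h\bet$ contribution and the jump into the $O(\tau_n^{k+1})$ term $\Qn{(j_h(\bu^I-\bu)',\bxt)}$, and apply Lemma~\ref{lem:nonlinear_estimate} with $\gamma_i=1$; your bookkeeping of the Young constants yielding exactly $2(1+C_n(\bu))$ after adding $\Qn{\|\bxt\|_0^2}$ to both sides is also the paper's route (the $\pi_n$-test alternative you mention is reserved for Lemma~\ref{lem:veloc_l2_estim}). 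The only differences are cosmetic regroupings (three-way splitting of $j_h\tu-\bu$ versus the paper's $j_h\bet+\beh$, and merging $S_h(\beh,\cdot)$ with $S_h(\bu,\cdot)$).
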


\begin{proof}
	Recall $\bxt=j_h\tu - \uht \in \xx_k^{\textrm{div}}$ and
	$\tht=i_h \tp -\pht \in Y_k$.
	Setting $(\vht,\qht) = (\bxt,\tht) $ in \eqref{eq:error_equation} and using
	\begin{equation*}
	\Qn{(\bxt',\bxt)} + \left([\bxt]_{n-1},\bxt(t_{n-1}^+) \right)
	= \frac{1}{2} \|\bxt(t_n^-)\|_0^2 - \frac{1}{2} \|\bxt(t_{n-1}^-)\|_0^2
	+ \frac{1}{2} \|[\bxt]_{n-1}\|_0^2
	\end{equation*}
	we obtain by~\eqref{eq:lps}
	\begin{multline}
	\frac{1}{2} \|\bxt(t_n^-)\|_0^2 - \frac{1}{2} \|\bxt(t_{n-1}^-)\|_0^2
	+ \frac{1}{2} \|[\bxt]_{n-1}\|_0^2 +
	\Qn{\nu \|\nabla \bxt\|_0^2 + S_h(\bxt,\bxt)}\\
	\le \mathcal{N}_1+ \mathcal{N}_2 
	+ \Qn{ S_h(\bu,\bxt)}
	-\Qn{n(\bu,\bu,\bxt) - n(\uht,\uht,\bxt)}
	\label{eq:fdp_vel_err}
	\end{multline}
	with
	\begin{align*}
	\mathcal{N}_1
	& \coloneqq \Qn{\left(j_h\bet', \bxt\right)}
	+ \left([j_h\bet]_{n-1} , \bxt(t_{n-1}^+)\right) +
	\Qn{A_h\big((j_h\bet,i_h\phit),(\bxt,\tht) \big)}
	\intertext{and}
	\mathcal{N}_2
	& \coloneqq \Qn{\left(\beh', \bxt\right)}
	+ \left([\beh]_{n-1} , \bxt(t_{n-1}^+)\right)
	+ \Qn{A_h\big((\beh,\phih),(\bxt,\tht) \big)}.
	\end{align*}
	We will bound the terms on the right-hand side of~\eqref{eq:fdp_vel_err}
	separately. Taking into consideration that $(\bu,p)$ and $(\tu,\tp)$
	coincide in all quadrature points, we have $\bet(t_{n,i})=\boldsymbol{0}$ in all
	quadrature points. This gives
	\begin{equation}
	\mathcal{N}_1
	=\Qn{ \left(j_h \be_{\tau}', \bxt\right)} +
	\left([j_h\be_{\tau}]_{n-1} , \bxt(t_{n-1}^+)\right)  =\Qn{\left(j_h(\bu^I - \bu)', \bxt \right)} 
	\label{eq:n1}
	\end{equation}
	where we have used~\eqref{eq:rel_interpl} and the fact that the
	interpolation operators in time and space commute. The
	Cauchy--Schwarz inequality and Young's inequality give
	\begin{align*}
	\mathcal{N}_1
	&\le \Qn{\|j_h(\bu^I-\bu)'\|_0^2}^{1/2} \Qn{\|\bxt\|_0^2}^{1/2}
	\le  \frac{2}{3}  \Qn{\|j_h(\bu^I-\bu)'\|_0^2}
	+ \frac{3}{8} \Qn{\|\bxt\|_0^2}.
	\end{align*}
	Note that the jump term in $\mathcal{N}_2$ vanishes due to the continuity of
	$\beh=j_h \bu - \bu$ in time.
	Hence, we get
	\begin{equation*}
	\mathcal{N}_2
	=Q_n\Big[ \left(\beh', \bxt\right) 
	+ \nu\left(\nabla \beh,\nabla \bxt \right) -(\phih,\Div \bxt) +S_h(\beh,\bxt)\Big]
	\end{equation*}
	since $(\Div \be_h,\tht)=0$ due to~\eqref{discdiv}.
	
	Adapting the techniques used to bound the similar terms in the semi-discrete
	analysis, we get
	\begin{align}
	\mathcal{N}_2
	&\le  \Qn{2\|\beh'\|_0^2 + \frac\nu2 \|\nabla \beh\|_0^2 +2 S_h(\beh,\beh)
		+ \frac{2d}{\mu_h^{\textrm{min}}}\|\phih\|_0^2}
	+ \frac{\nu}{2} \Qn{ \|\nabla \bxt\|_0^2}
	\nonumber\\
	&\quad + \frac{1}{4}\Qn{S_h(\bxt,\bxt)}+\frac{1}{8} \Qn{\|\bxt\|_0^2}.
	\label{eq:n2}
	\end{align}
	The estimate for the third term on the right-hand side
	of~\eqref{eq:fdp_vel_err} uses the Cauchy--Schwarz-like
	estimate~\eqref{eq:QnCSU} and Young's inequality to get
	\[
	\Qn{ S_h(\bu, \bxt)} \le 2 \Qn{S_h(\bu,\bu)} + \frac{1}{8}\Qn{S_h(\bxt,\bxt)}.
	\]
	Collecting the above estimates in~\eqref{eq:fdp_vel_err}, using
	estimate~\eqref{eq:nonlinear_estimate} for the difference of the nonlinear
	terms, and contributing similar norm terms to the left-hand side, we obtain
	the statement of this lemma.
\end{proof}

\begin{lemma}Let the assumptions of Lemma~\ref{lem:fdp_disc_velo_est} hold.
	Then, the bound
	\label{lem:veloc_l2_estim}
	\begin{equation}
	\Qn{\|\bxt\|_0^2}
	\le 8 C_1 \tau_n\|\bxt (t_{n-1}^-)\|_0^2 + C \tau_nE_n^2
	\label{eq:velo_l2_estim}
	\end{equation}
	holds true provided that
	\begin{equation}
	\label{eq:restrict_tau}
	\tau_n \le \frac{1}{8\big(1+C_n(\bu)\widehat{s}\big)}
	\end{equation}
	is fulfilled.
\end{lemma}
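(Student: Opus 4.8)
The proof follows the pattern of the second part of the proof of Theorem~\ref{thm:stab_velocity}. The plan is to test the error equation~\eqref{eq:error_equation} on $I_n$ with the Lagrange-reconstruction pair $(\vht,\qht)=(\pi_n\bxt,\pi_n\tht)$ of~\cite{FKNP11}. Since $\bxt\in\xx_k^{\textrm{div}}$ and $\tht\in Y_k$, and $\pi_n\bxt(t_{n,i})=\widehat s_i\,\bxt(t_{n,i})\in\vv_h^{\textrm{div}}$, $\pi_n\tht(t_{n,i})=\widehat s_i\,\tht(t_{n,i})\in Q_h$ with $\widehat s_i\ge1$, this is an admissible test pair, and $\pi_n\bxt$ satisfies the hypotheses of Lemma~\ref{lem:nonlinear_estimate} with $\gamma_i=\widehat s_i$, so~\eqref{eq:gammai} holds. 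Writing $[\bxt]_{n-1}=\bxt(t_{n-1}^+)-\bxt(t_{n-1}^-)$, using the \cite{FKNP11} identity quoted in the proof of Theorem~\ref{thm:stab_velocity} (the display after~\eqref{eq:stab_n2}), and exploiting that $A_h$ is linear in its third argument together with~\eqref{eq:Ah_coer} and $\widehat s_i\ge1$ to obtain $\Qn{A_h((\bxt,\tht),(\pi_n\bxt,\pi_n\tht))}\ge\Qn{\nu\|\nabla\bxt\|_0^2+S_h(\bxt,\bxt)}$, the left-hand side of the tested equation is bounded below by
\[
\tfrac12\|\bxt(t_n^-)\|_0^2+\tfrac{1}{2\tau_n}\|\bxt\|_n^2
+\Qn{\nu\|\nabla\bxt\|_0^2+S_h(\bxt,\bxt)}
-\big(\bxt(t_{n-1}^-),\pi_n\bxt(t_{n-1}^+)\big).
\]
I then move the last term to the right, where~\eqref{eq:fs_est1} and Young's inequality give $\big(\bxt(t_{n-1}^-),\pi_n\bxt(t_{n-1}^+)\big)\le C_1\|\bxt(t_{n-1}^-)\|_0^2+\tfrac1{4\tau_n}\|\bxt\|_n^2$.

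For the right-hand side of~\eqref{eq:error_equation} I would repeat, with $\bxt$ replaced by $\pi_n\bxt$, the estimates used for $\mathcal N_1$, $\mathcal N_2$ and $\Qn{S_h(\bu,\cdot)}$ in the proof of Lemma~\ref{lem:fdp_disc_velo_est}. The $\mathcal N_1$-type part again collapses to $\Qn{\big(j_h(\bu^I-\bu)',\pi_n\bxt\big)}$ by~\eqref{eq:rel_interpl} (applicable since $\pi_n\bxt|_{I_n}\in\mathbb P_k(I_n,\vv_h)$) and the commutativity of the spatial and temporal interpolants. Whenever $\pi_n\bxt$ appears under a norm or a stabilisation form it is converted back to $\bxt$ by~\eqref{eq:fs_est2}, namely $\Qn{\|\pi_n\bxt\|_0^2}\le\widehat s\|\bxt\|_n^2$ and $\Qn{S_h(\pi_n\bxt,\pi_n\bxt)}\le\widehat s^2\Qn{S_h(\bxt,\bxt)}$. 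Applying~\eqref{eq:QnCSU} and Young's inequality against the $\tfrac1{\tau_n}\|\bxt\|_n^2$ present on the left, and collecting the data-dependent remainders with the interpolation bounds~\eqref{j1},~\eqref{j2},~\eqref{kappa} and~\eqref{t3}--\eqref{t5}, turns all these contributions into $C\,E_n^2$ plus a small multiple of $\tfrac1{\tau_n}\|\bxt\|_n^2$ plus a multiple of $\Qn{S_h(\bxt,\bxt)}$ that is absorbed on the left. The nonlinear difference is handled with Lemma~\ref{lem:nonlinear_estimate} applied to $\vht=\pi_n\bxt$: it yields $C\Qn{\|\bu-j_h\bu\|_1^2}\|\bu\|_{C(I_n,W^{1,\infty})}^2$ (absorbed into $C\,E_n^2$ via~\eqref{j1}), the term $\tfrac18\Qn{S_h(\bxt,\bxt)}$ (absorbed on the left), and $C_n(\bu)\Qn{\|\pi_n\bxt\|_0^2}\le C_n(\bu)\widehat s\,\|\bxt\|_n^2$.

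Collecting everything, absorbing the small $\tfrac1{\tau_n}\|\bxt\|_n^2$-pieces and the $S_h$-terms on the left, and dropping the non-negative quantities $\|\bxt(t_n^-)\|_0^2$ and $\Qn{\nu\|\nabla\bxt\|_0^2+S_h(\bxt,\bxt)}$, one arrives at
\[
\tfrac1{4\tau_n}\|\bxt\|_n^2\le C_1\|\bxt(t_{n-1}^-)\|_0^2+C\,E_n^2
+\big(1+C_n(\bu)\widehat s\big)\|\bxt\|_n^2 .
\]
The restriction~\eqref{eq:restrict_tau} reads $\big(1+C_n(\bu)\widehat s\big)\le\tfrac1{8\tau_n}$, so the last term is absorbed into the left, leaving $\tfrac1{8\tau_n}\|\bxt\|_n^2\le C_1\|\bxt(t_{n-1}^-)\|_0^2+C\,E_n^2$. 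Multiplying by $8\tau_n$ and using $\Qn{\|\bxt\|_0^2}\le\|\bxt\|_n^2$ from~\eqref{eq:fs_est2} gives~\eqref{eq:velo_l2_estim}.

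I expect the main obstacle to be the constant bookkeeping on the right-hand side: every Young's inequality must be calibrated so that $\|\bxt\|_n^2$ is multiplied by no more than $1+C_n(\bu)\widehat s$, since this is precisely what~\eqref{eq:restrict_tau} can soak up; in particular one has to avoid accumulating extra powers of $\widehat s$ in front of $\|\bxt\|_n^2$ when passing from $\pi_n\bxt$ back to $\bxt$, and instead use the $\tfrac1{\tau_n}\|\bxt\|_n^2$ furnished by the discrete time-derivative term to absorb the genuinely $\tau$-small remainders. The other delicate point is the admissibility check for the test pair — that $\pi_n\bxt\in\mathbb P_k(I_n,\vv_h^{\textrm{div}})$ and $\pi_n\bxt(t_{n,i})=\widehat s_i\bxt(t_{n,i})$ with $\widehat s_i\ge1$ — because this is what simultaneously produces the coercivity gain $\widehat s_i\ge1$ on the $A_h$-term and makes Lemma~\ref{lem:nonlinear_estimate} applicable.
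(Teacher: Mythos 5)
Your proposal is correct and follows essentially the same route as the paper: test the error equation with $(\pi_n\bxt,\pi_n\tht)$, bound the left-hand side from below as in the stability proof, apply Lemma~\ref{lem:nonlinear_estimate} with $\gamma_i=\widehat s_i$, convert $\pi_n\bxt$ back to $\bxt$ via~\eqref{eq:fs_est1}--\eqref{eq:fs_est2}, and absorb the $(1+C_n(\bu)\widehat s)\|\bxt\|_n^2$ term using~\eqref{eq:restrict_tau}. The delicate points you flag (the admissibility of the test pair and the calibration of the coefficient in front of $\|\bxt\|_n^2$) are exactly the ones the paper's argument hinges on.
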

\begin{remark}
	Note condition~\eqref{eq:restrict_tau} is not a CFL conditions since the
	bounds depend only on the problem data and the order of the dG method, but
	not on the spatial mesh size $h$.
\end{remark}
\begin{proof}
	Substituting $(\vht,\qht) = (\pi_n \uht, \pi_n \qht)$ in the error
	equation~\eqref{eq:error_equation} and proceeding for the left-hand side 
	as in the proof of the stability estimate, we arrive at 
	\begin{equation}
	\label{eq:l2_estimate_erreq}
	\frac{1}{2} \|\bxt (t_n^-)  \|_0^2
	+ \frac{1}{2\tau_n} \|\bxt\|_n^2
	+ \Qn{\nu \|\nabla \bxt\|_0^2 + S_h(\bxt,\bxt)}
	\le J_1 + J_2 + J_3 + J_4
	\end{equation}
	with
	\begin{align*}
	J_1 &\coloneqq \big(\bxt (t_{n-1}^-), \pi_n \bxt(t_{n-1}^+)\big),\\
	J_2 &\coloneqq \Qn{((j_h\tu - \bu)',\pi_n \bxt )}
	+ \big([j_h\tu - \bu]_{n-1}, \pi_n \bxt(t_{n-1}^+)\big),\\
	J_3 &\coloneqq \Qn{A_h\big((j_h\tu - \bu, i_h\tp-p),
		(\pi_n\bxt,\pi_n\tht) \big) +S_h( \bu, \pi_n \bxt)},\\
	J_4 &\coloneqq \Qn{n(\uht, \uht, \pi_n\bxt)
		- n\left(\bu,\bu,\pi_n\bxt\right)}.
	\end{align*}
	We shall consider the terms on the right-hand side separately. We get
	for the first term
	\begin{equation*}
	J_1 \le \|\bxt (t_{n-1}^-)\|_0\; \|\pi_n \bxt(t_{n-1}^+)\|_0
	\le C_1\|\bxt (t_{n-1}^-)\|_0^2 
	+ \frac{1}{4 \tau_n} \| \bxt\|_n^2
	\end{equation*}
	by using the Cauchy--Schwarz inequality, the bound~\eqref{eq:fs_est1}, and
	Young's inequality.
	Taking additionally into consideration that $(\bu,p)$ and $(\tu,\tp)$
	coincide in all quadrature points, the Cauchy--Schwarz inequality,
	\eqref{eq:fs_est2}, and Young's inequality give
	\begin{align*}
	J_2 &= \Qn{(j_h\bet' + \beh', \pi_n \bxt)}
	+ \left([j_h \bet]_{n-1}, \pi_n \bxt (t_{n-1}^+) \right)\\
	& = \Qn{( \beh', \pi_n \bxt)} +\Qn{(j_h(\bu^I - \bu)', \pi_n \bxt))} 
	\le \frac{\widehat{s}}{2} \Qn{\|\beh'\|_0^2}
	+ \frac{\widehat{s}}{2}\Qn{\|j_h(\bu^I - \bu)'\|_0^2}
	+ \|\bxt\|_n^2
	\end{align*}
	where~\eqref{eq:rel_interpl} and the commutation of temporal and spatial
	interpolations were exploited. Using the definition of $A_h$, the error
	splitting, and the same arguments as in the proof of
	Thm.~\ref{thm:semi_disc_vel_estimates}, we get
	\begin{align*}
	J_3 &\le \frac{\widehat{s}^2}{2} \Qn{ \nu\|\nabla \beh\|_0^2 }
	+ \frac{2\widehat{s}^2\,d}{\mu_h^{\textrm{min}}}
	\Qn{\|\varphi_h\|_0^2}
	+ 2\widehat{s}^2 \Qn{S_h(\beh,\beh)}
	+ 2\widehat{s}^2 \Qn{S_h(\bu,\bu)} 
	\\
	&\qquad + \frac{1}{2} \Qn{\nu\|\nabla \bxt\|_0^2}
	+ \frac{3}{8} \Qn{S_h(\bxt,\bxt)}.
	\end{align*}
	Using definition~\eqref{eq:lagrange_def} together with
	$\widehat{s}_i\ge 1$, $i=1,\dots,k+1$,
	Lemma~\ref{lem:nonlinear_estimate} can be applied
	with $\vht=\pi_n \bxt$. Exploiting~\eqref{eq:fs_est2}, we obtain
	\begin{align*}
	J_4
	& \le C \Qn{ \|\bu - j_h \bu \|_1^2} \|\bu\|_{C(I_n,W^{1,\infty})}^2
	+ C_n(\bu) \Qn{\|\pi_n\bxt\|_0^2} + \frac{1}{8} \Qn{S_h(\bxt,\bxt)}\\
	& \le C \Qn{ \|\beh\|_1^2} \|\bu\|_{C(I_n,W^{1,\infty})}^2
	+ C_n(\bu) \widehat{s} \|\bxt\|_n^2
	+ \frac{1}{8} \Qn{S_h(\bxt,\bxt)}.
	\end{align*}
	Using the bounds for $J_1,\dots,J_4$ in~\eqref{eq:l2_estimate_erreq} gives
	after contributing similar term to the left-hand side the estimate
	\begin{align*}
	\frac{1}{2} & \|\bxt (t_n^-)  \|_0^2
	+ \left( \frac{1}{4\tau_n} - 1 - \widehat{s} C_n(\bu) \right)\|\bxt\|_n^2
	+ \frac{1}{2}\Qn{\nu \|\nabla \bxt\|_0^2 + S_h(\bxt,\bxt)}\\
	& \le C_1\|\bxt (t_{n-1}^-)\|_0^2 
	+ \frac{\widehat{s}}{2} \Qn{\|\beh'\|_0^2}
	+ \frac{\widehat{s}}{2}\Qn{\|j_h(\bu^I - \bu)'\|_0^2}
	+ \Qn{ \|\beh \|_1^2}\\
	&\quad + \frac{\widehat{s}^2\,d}{2} \Qn{ \nu\|\nabla \beh\|_0^2 } 
	+ \frac{2\widehat{s}^2}{\mu_h^{\textrm{min}}}
	\Qn{\|\varphi_h\|_0^2}
	+ 2\widehat{s}^2 \Qn{S_h(\beh,\beh)}
	+ 2\widehat{s}^2 \Qn{S_h(\bu,\bu)} .
	\end{align*}
	Exploiting the condition~\eqref{eq:restrict_tau}, the proof is completed by
	applying stability and error estimates for the interpolations operators
	in space and time.
\end{proof}

\begin{theorem}\label{thm:fdp_velocity_estimates}
	Assume that the finite element spaces satisfy the discrete inf-sup
	condition~\eqref{eq:disc_inf_sup}.
	Suppose the assumptions~\ref{assmption_a1}, \ref{assmption_a2},
	and $\mu_K\sim 1$ for all $K\in\mathcal{T}_h$. Let $(\bu,p)$ be the
	solution of the continuous problem~\eqref{eq:weak_form} and $(\uht,\pht)$
	be the solution of the fully discrete scheme~\eqref{eq:fdp_one_form} with
	$\bu_{0,h}=j_h\bu_0$. Furthermore, assume that the solution $(\bu,p)$
	satisfies the regularity assumption~\ref{reg:assumpt}.
	Then, there exists a constant $C$ independent of $h$, $\nu$, and $\tau$
	such that the error estimate
	\begin{multline}
	\label{eq:velo_est}
	\|(\eht)(t_m^{-})\|_0^2  + 
	\sum_{n=1}^m \Qn{\nu \|\nabla \eht\|_0^2 + S_h(\eht,\eht)
		+\|\eht\|_0^2}
	+ \sum_{n=1}^m \|[\eht]_{n-1}\|_0^2\\
	\begin{aligned}
	\le C \exp\big(16C_1(1+C(\bu))t_n\big) (1+C(\bu)) \sum_{m=1}^n E_m^2
	\end{aligned}
	\end{multline}
	holds true where
	\begin{equation}
	\label{eq:Cu}
	C(\bu) = \max_{n=1,\dots,N} C_n(\bu)
	\end{equation}
	is independent of $\tau_n$, $h$, and $\nu$.
\end{theorem}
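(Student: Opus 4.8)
The plan is to combine Lemmata~\ref{lem:fdp_disc_velo_est} and~\ref{lem:veloc_l2_estim} into a single one-step recursion for $\|\bxt(t_n^-)\|_0^2$, to close it by a discrete Gronwall argument, and then to pass from $\bxt$ to $\eht$ via the triangle inequality using the error splitting $\eht=\bxt-j_h\bet-\beh$ from~\eqref{eq:error_splitting}.

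First I would insert the bound of Lemma~\ref{lem:veloc_l2_estim} for $\Qn{\|\bxt\|_0^2}$ into the right-hand side of~\eqref{eq:fdp_disc_velo_est}. Since $\tau_n\le\tau\le1$ and $C_n(\bu)\le C(\bu)$, the step-size condition~\eqref{eq:restrict_tau} needed there is in force, and after moving the non-negative dissipative and jump contributions to the left and abbreviating $a_n\coloneqq\|\bxt(t_n^-)\|_0^2$ and $d_n\coloneqq\|[\bxt]_{n-1}\|_0^2+\Qn{\nu\|\nabla\bxt\|_0^2+S_h(\bxt,\bxt)+\|\bxt\|_0^2}$ one obtains an inequality of the form
\[
a_n+d_n\le\bigl(1+16C_1(1+C_n(\bu))\tau_n\bigr)\,a_{n-1}+C\,(1+C(\bu))\,E_n^2,\qquad n=1,\dots,N,
\]
where $a_0=0$ because the choice $\uht(t_0^-)=j_h\bu_0$ together with $\tu(t_0^-)=\bu_0$ gives $\bxt(t_0^-)=\boldsymbol{0}$. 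Dropping $d_n\ge0$ and applying the discrete Gronwall lemma, using $\prod_{n\le m}(1+16C_1(1+C_n(\bu))\tau_n)\le\exp\!\bigl(16C_1(1+C(\bu))\sum_{n\le m}\tau_n\bigr)=\exp(16C_1(1+C(\bu))t_m)$, bounds $a_m$; then summing the telescoping part $\sum_{n\le m}(a_{n-1}-a_n)\le a_0=0$ and estimating $\sum_{n\le m}\tau_n a_{n-1}\le t_m\max_n a_n$ recovers $\sum_{n=1}^m d_n$ with the same exponential factor, the extra factor $t_m(1+C(\bu))$ being absorbed into the exponential. This proves~\eqref{eq:velo_est} with $\eht$ replaced by $\bxt$ throughout.

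It remains to transfer the estimate to $\eht=\bxt-j_h\bet-\beh$ by the triangle inequality. The contributions of $\beh=j_h\bu-\bu$ to all three groups of terms on the left of~\eqref{eq:velo_est} are of order $h^{2r}$ by~\eqref{j1} and~\eqref{kappa} and match the $h^r$ part of $E_n$ (including its $\sqrt{\nu}$ summand), and $\beh$ is continuous in time so $[\beh]_{n-1}=\boldsymbol{0}$. The contributions of $\bet=\tu-\bu$ are harmless: since $\tu$ and $\bu$ coincide in every Gau\ss--Radau node, $\bet(t_{n,i})=\boldsymbol{0}$ for all $i$, hence $\Qn{\|j_h\bet\|_0^2}=0$, $\Qn{\nu\|\nabla j_h\bet\|_0^2+S_h(j_h\bet,j_h\bet)}=0$, and $\|j_h\bet(t_n^-)\|_0=0$ because $t_n$ is a node; the only surviving piece is the jump $[\,j_h\bet\,]_{n-1}=j_h\bet(t_{n-1}^+)$ (here $\bet(t_{n-1}^-)=\boldsymbol{0}$: for $n\ge2$ because $t_{n-1}$ is a node of $I_{n-1}$, for $n=1$ because $\tu(t_0^-)=\bu_0$), which is controlled through the $H^1$-stability~\eqref{jstab} of $j_h$ and the temporal interpolation estimates~\eqref{t3}--\eqref{t5}. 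With $C(\bu)=\max_n C_n(\bu)$ as in~\eqref{eq:Cu} this yields~\eqref{eq:velo_est}.

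The step I expect to be the main obstacle is the Gronwall/telescoping bookkeeping: one must verify that substituting Lemma~\ref{lem:veloc_l2_estim} keeps the coefficient of $a_{n-1}$ of the form $1+c\tau_n$ with $c\lesssim1+C(\bu)$, that the dissipative and jump sums $\sum_n d_n$ come out with precisely the stated exponential constant after $\sum_n\tau_n\le t_m$, and that the accumulated spatial and temporal interpolation errors of $(\bu,p)$ — in particular the endpoint and jump contributions — collapse exactly into $\sum_{n=1}^m E_n^2$ with all constants independent of $h$, $\nu$, and $\tau$. Everything else is a routine refinement of arguments already used in Lemmata~\ref{lem:fdp_disc_velo_est} and~\ref{lem:veloc_l2_estim}.
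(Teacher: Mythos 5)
Your proposal follows essentially the same route as the paper: combine Lemma~\ref{lem:fdp_disc_velo_est} with Lemma~\ref{lem:veloc_l2_estim} to obtain the one-step recursion $a_n+d_n\le(1+16C_1(1+C_n(\bu))\tau_n)a_{n-1}+C(1+C_n(\bu))E_n^2$ with $a_0=\boldsymbol{0}$, apply the discrete Gronwall lemma, and then pass from $\bxt$ to $\eht$ via the splitting~\eqref{eq:error_splitting}, using that $\bet$ vanishes at all Gau\ss--Radau nodes. The only inaccuracy is your claim that $\tau_n\le\tau\le 1$ already guarantees the step-size restriction~\eqref{eq:restrict_tau} --- it does not, since that condition requires $\tau_n\le 1/(8(1+C_n(\bu)\widehat{s}))$; this restriction must simply be assumed (the paper's theorem statement omits it as well, but its proof relies on Lemma~\ref{lem:veloc_l2_estim} just as yours does).
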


\begin{proof}
	Combining the estimates of Lemmata~\ref{lem:fdp_disc_velo_est}
	and~\ref{lem:veloc_l2_estim}, we get for $n=1,\ldots,N$
	\begin{multline*}
	\|\bxt(t_n^-)\|_0^2 +  \|[\bxt]_{n-1}\|_0^2 +
	\Qn{\nu \|\nabla \bxt\|_0^2 + S_h(\bxt,\bxt)} + \Qn{\|\bxt\|_0^2}\\
	\begin{aligned}
	& \le \|\bxt(t_{n-1}^-)\|_0^2
	+ C\, E_n^2 + 2\big(1+ C_n(\bu)\big)
	\Big(8 C_1 \tau_n\|\bxt (t_{n-1}^-)\|_0^2 + C E_n^2\Big)\\
	& \le \Big(
	1+ 16\big(1+C_n(\bu)\big)C_1\tau_n
	\Big) \|\bxt (t_{n-1}^-)\|_0^2
	+ C\big(1+C_n(\bu)\big) E_n^2 .
	\end{aligned}
	\end{multline*}
	The application of a discrete Gronwall lemma gives
	\begin{multline*}
	\|\bxt(t_n^-)\|_0^2  + 
	\sum_{m=1}^n \Big(Q_m\big[\nu \|\nabla \bxt\|_0^2 + S_h(\bxt,\bxt)
	+ \|\bxt\|_0^2\big] + \|[\bxt]_{m-1}\|_0^2\Big)\\
	\le C \exp\big(16C_1(1+C(\bu))t_n\big) (1+C(\bu)) \sum_{m=1}^n E_m^2
	\end{multline*}
	with $C(\bu)$ from~\eqref{eq:Cu}.
	The error splitting~\eqref{eq:error_splitting}, the triangle inequality,
	and the fact that $\bu$ and $\tu$ coincide in all quadrature points lead to
	\begin{multline*}
	\|\eht(t_n^-)\|_0^2  + 
	\sum_{m=1}^n
	\bigg(Q_m\big[\nu \|\nabla\eht\|_0^2
	+ S_h(\eht,\eht)
	+ \|\eht\|_0^2 \big] +  \|[\eht]_{m-1}\|_0^2\bigg) \\
	\begin{aligned}
	& \le 3 \|\bxt(t_n^-)\|_0^2  + 
	3\sum_{m=1}^n \Big(Q_m\big[\nu \|\nabla (\bxt) \|_0^2 + S_h(\bxt,\bxt)
	+\|\bxt\|_0^2\big] + \|[\bxt\|_{m-1}\|_0^2\Big)\\
	& \qquad +   3 \sum_{m=1}^n \Big(Q_m\big[\nu \|\nabla (j_h\bet)\|_0^2
	+ S_h(j_h\bet,j_h\bet) + \|j_h\bet\|_0^2\big] +
	\|[j_h\bet]_{m-1}\|_0^2\Big) \\
	&\qquad + 3 \|\beh(t_n^-)\|_0^2  + 
	3\sum_{m=1}^n \Big(Q_m\big[\nu \|\nabla \beh \|_0^2 + S_h(\beh,\beh)
	+ \|\beh\|_0^2 \big]\Big) .
	\end{aligned}
	\end{multline*}
	The statement of the theorem then follows by collecting the estimate for
	$\bxt$ as well as exploiting the stability and interpolation error
	estimates with respect to space and time. 
\end{proof}
\subsection{Pressure estimates: convergence}
This subsection will present a convergence result for the pressure that
depends unfortunately on the inverse of the length of the smallest time
step.
\begin{theorem}
	Suppose assumptions~\ref{assmption_a1}, \ref{assmption_a2},
	$\mu_K \sim 1$ for all $K\in\mathcal{T}_h$, and the discrete inf-sup
	condition~\eqref{eq:disc_inf_sup}. Furthermore, let the
	regularity assumption~\ref{reg:assumpt} hold. Then, for the solutions
	$(\uht,\pht)$ of the fully discrete scheme~\eqref{eq:fdp_one_form} and
	$(\bu,p)$ of the continuous problem~\eqref{eq:weak_form}, the error estimate
	\begin{equation}
	\label{eq:press_est}
	\int_0^T \|p-\pht\|_0^2 \le C \frac{1}{\tau_{\textrm{min}}^2} E^2
	\end{equation}
	holds true where $C$ is independent of $\nu$, $h$, and $\tau$ while
	\begin{equation}
	\label{eq:E}
	E^2 \coloneqq \exp\big(16C_1(1+C(\bu))T\big) (1+C(\bu))
	\sum_{m=1}^N E_m^2
	\end{equation}
	is the error bound for the velocity error.
\end{theorem}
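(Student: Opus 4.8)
The plan is to split the pressure error as $p-\pht=\tht-i_h\phit-\phih$ following~\eqref{eq:error_splitting} and to treat the three pieces separately. The two interpolation pieces are of the optimal order $h^{2r}+\tau^{2(k+1)}$ and are handled by standard estimates: $\|\phih\|_0=\|i_hp-p\|_0\le Ch^r\|p\|_r$ by~\eqref{j2}; and since $i_h$ commutes with temporal interpolation as well as with $\partial_t$ and is $L^2$-stable by~\eqref{js}, the function $i_h\phit$ is, up to sign, the temporal Gau\ss--Radau interpolation error of $i_hp$, so~\eqref{t4} yields $\intin\|i_h\phit\|_0^2\le C\tau_n^{2(k+1)}\intin\|p^{(k+1)}\|_0^2$. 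The entire difficulty lies in the fully discrete part $\tht\in Y_k$, which I would estimate pointwise at the Gau\ss--Radau nodes.

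Since $\tht|_{I_n}$ is a $Q_h$-valued polynomial of degree $\le k$, the scalar map $t\mapsto\|\tht(t)\|_0^2$ is exactly integrated by $Q_n$, so $\intin\|\tht\|_0^2=\Qn{\|\tht\|_0^2}\le C\tau_n\max_{1\le i\le k+1}\|\tht(t_{n,i})\|_0^2$ and it suffices to bound $\|\tht(t_{n,i})\|_0$ at each node. For this I would use the discrete inf-sup condition~\eqref{eq:disc_inf_sup}, $\beta_0\|\tht(t_{n,i})\|_0\le\sup_{\bv_h\in\vv_h\setminus\{\boldsymbol{0}\}}(\Div\bv_h,\tht(t_{n,i}))/|\bv_h|_1$, and extract $(\Div\bv_h,\tht(t_{n,i}))$ from the error equation~\eqref{eq:error_equation} tested with the time-localised pair $\qht=0$, $\vht(t)=\widehat{\omega}_i^{-1}\varphi_{n,i}(t)\bv_h$ (extended by zero outside $I_n$). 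Because $\varphi_{n,i}(t_{n,j})=\delta_{ij}$ and $Q_n$ is exact up to degree $2k$, all quadrature terms collapse to their value at $t_{n,i}$: in particular $\Qn{A_h((\bxt,\tht),(\vht,0))}=\tfrac{\tau_n}{2}[\nu(\nabla\bxt(t_{n,i}),\nabla\bv_h)-(\tht(t_{n,i}),\Div\bv_h)+S_h(\bxt(t_{n,i}),\bv_h)]$, the time-derivative term becomes $\tfrac{\tau_n}{2}(\bxt'(t_{n,i}),\bv_h)$, and the initial-value term becomes $\beta_i([\bxt]_{n-1},\bv_h)$ with $\beta_i=\widehat{\omega}_i^{-1}\widehat{\varphi}_i(-1)$. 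Solving for $(\tht(t_{n,i}),\Div\bv_h)$ and dividing by $|\bv_h|_1$ (using Poincar\'e where needed) bounds $\|\tht(t_{n,i})\|_0$ by $C$ times the sum of $\|\bxt'(t_{n,i})\|_0$, $\sqrt{\nu}\,\|\nabla\bxt(t_{n,i})\|_0$, $S_h(\bxt(t_{n,i}),\bxt(t_{n,i}))^{1/2}$, $\tau_n^{-1}\|[\bxt]_{n-1}\|_0$, $\tau_n^{-1}\|[j_h\tu-\bu]_{n-1}\|_0$, the remaining interpolation residuals on the right of~\eqref{eq:error_equation} evaluated at $t_{n,i}$ (for which the $\tfrac{\tau_n}{2}$ factors cancel, leaving $h^r$-order terms), and the pointwise nonlinear difference $n(\bu,\bu,\bv_h)-n(\uht,\uht,\bv_h)$ at $t_{n,i}$.

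The nonlinear difference I would handle as in Lemma~\ref{lem:nonlinear_estimate}. Since $\bet$ vanishes at the nodes, $\eht(t_{n,i})=\bxt(t_{n,i})-\beh(t_{n,i})$ is discretely divergence-free and $\uht(t_{n,i})=j_h\bu(t_{n,i})-\bxt(t_{n,i})$, so after the customary splitting of $n(\bu,\bu,\cdot)-n(\uht,\uht,\cdot)$ the undifferentiated velocity factor is replaced by the $L^\infty$-stable interpolant $j_h\bu$ (via~\eqref{jstab}) plus the small quantity $\bxt$, while the divergences of the discretely divergence-free fields are absorbed through Lemma~\ref{lem:discdiv}. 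This produces contributions of the type $(1+C_n(\bu))^{1/2}\|\bxt(t_{n,i})\|_0$, $S_h(\bxt(t_{n,i}),\bxt(t_{n,i}))^{1/2}$ and further $h^r$-order interpolation terms.

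It then remains to square, multiply by $\tau_n$, pass from the nodal maxima to the $Q_n$-averages (norm equivalence on the relevant polynomial spaces), and sum over $n$. The contributions of $\sqrt{\nu}\,\|\nabla\bxt(t_{n,i})\|_0$, $S_h(\bxt(t_{n,i}),\bxt(t_{n,i}))^{1/2}$ and the nonlinear difference then combine into $C(1+C(\bu))\sum_n\Qn{\nu\|\nabla\bxt\|_0^2+S_h(\bxt,\bxt)+\|\bxt\|_0^2}$, which the proof of Theorem~\ref{thm:fdp_velocity_estimates} controls by $E^2$, and the interpolation residuals sum to $C\sum_nE_n^2\le E^2$. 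The only terms costing negative powers of $\tau_{\textrm{min}}$ are $\|\bxt'(t_{n,i})\|_0$ and the two $\tau_n^{-1}$-scaled jumps: a temporal inverse estimate on the polynomial $\bxt|_{I_n}$, together with $\|\bxt\|_{L^\infty(I_n;L^2)}^2\le C\tau_n^{-1}\Qn{\|\bxt\|_0^2}$, gives $\max_i\|\bxt'(t_{n,i})\|_0^2\le C\tau_n^{-3}\Qn{\|\bxt\|_0^2}$, hence $\tau_n\max_i\|\bxt'(t_{n,i})\|_0^2\le C\,\tau_{\textrm{min}}^{-2}\Qn{\|\bxt\|_0^2}$, while the jump terms contribute $\tau_{\textrm{min}}^{-2}\|[\bxt]_{n-1}\|_0^2$ and $\tau_{\textrm{min}}^{-2}E_n^2$, respectively. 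Summing and invoking Theorem~\ref{thm:fdp_velocity_estimates} once more to bound $\sum_n(\Qn{\nu\|\nabla\bxt\|_0^2+S_h(\bxt,\bxt)+\|\bxt\|_0^2}+\|[\bxt]_{n-1}\|_0^2)$ by $E^2$ yields $\int_0^T\|\tht\|_0^2\le C\,\tau_{\textrm{min}}^{-2}E^2$, and combining with the two interpolation pieces proves~\eqref{eq:press_est}. The main obstacle, and precisely the reason for the non-optimal factor $\tau_{\textrm{min}}^{-2}$, is this temporal inverse estimate for $\bxt'$ (and the $\tau_n^{-1}$ multiplying the jumps, which stems from $\vht(t_{n-1}^+)=\beta_i\bv_h$): once the test function has been localised in time to recover pointwise pressure information, $\bxt'$ can no longer be removed by integration by parts, and it is $\bxt$, not $\bxt'$, that the natural energy norm controls.
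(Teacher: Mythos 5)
Your proposal is correct and follows essentially the same route as the paper: the discrete inf-sup condition applied node-wise at the Gau\ss--Radau points, the error equation localised in time (your test functions $\widehat{\omega}_i^{-1}\varphi_{n,i}\bv_h$ reproduce exactly the algebraic form \eqref{eq:fdp_algebraic} that the paper subtracts from the continuous equation at $t_{n,i}$), the same three-way splitting of the nonlinear difference with Lemma~\ref{lem:discdiv} and \eqref{jstab}, and a temporal inverse inequality for the time-derivative term as the source of the $\tau_{\textrm{min}}^{-2}$ factor, with Theorem~\ref{thm:fdp_velocity_estimates} closing the argument. The only cosmetic difference is that you perform the triangle-inequality splitting of $p-\pht$ up front and pass through nodal maxima rather than summing the weighted nodal bounds directly, which changes nothing of substance.
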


\begin{proof}
	The proof of this lemma follows similar steps as in~\cite{ABM17}. However,
	the stabilization term and nonlinearity have to be taken into
	consideration.
	
	Consider $\tht=\pht-i_h\tp\in Y_k$ with the local representation
	\begin{equation*}
	\tht(t) =  \sum_{i=1}^{k+1} Q_{n,h}^i \varphi_{n,i}(t),\quad t\in I_n.
	\end{equation*}
	It follows from the discrete inf-sup condition~\eqref{eq:disc_inf_sup}
	that there exist discrete velocity fields $\wnh\in \vv_h$,
	$i=1,\ldots,k+1$, such that
	\begin{equation}\label{eq:pressure_inf_sup}
	\beta_0 \|\Qnh\|_0 \le \frac{(\Qnh,\Div \wnh)}{|\wnh|_1}.
	\end{equation}
	We obtain
	\[
	\Qnh = \pnh-i_h \tp(t_{n,i}) = \pnh - p(t_{n,i}) + p(t_{n,i}) -i_h p(t_{n,i})
	\]
	using $\tp(t_{n,i})=p(t_{n,i})$. Hence, we have 
	\begin{equation}\label{eq:pre_temp1}
	\big|(\Qnh,\Div \wnh)\big| \le \big|(\pnh - p(t_{n,i}), \Div \wnh)\big| +
	\big|\left(p(t_{n,i}) -i_h p(t_{n,i}), \Div \wnh\right)\big|.
	\end{equation} 
	We get from~\eqref{eq:weak_form} and~\eqref{eq:fdp_algebraic} that
	\begin{align*}
	\big(\pnh - p(t_{n,i}), \Div \wnh\big)
	&= -\frac{2\beta_i}{\tau_n}\left([\bu-\uht]_{n-1},\wnh  \right)
	- \left(\bu'(t_{n,i}) -\uht'(t_{n,i}),\wnh \right)
	- \nu \left(\nabla \enh, \nabla \wnh \right)\\
	& \qquad - n\big(\bu(t_{n,i}), \bu(t_{n,i}), \wnh\big)
	+ n \big( \unh, \unh, \wnh\big) + S_h(\unh, \wnh)
	\end{align*}
	where $\enh\coloneqq \bu(t_{n,i}) - \unh$.
	This expression is similar to the one for the transient Stokes
	problem considered in~\cite{ABM17}, but with the additional terms
	$n$ and $S_h$.
	
	It follows by Friedrichs and the Cauchy--Schwarz inequalities that
	\begin{multline}\label{eq:pressure_e1}
	\big(\pnh - p(t_{n,i}), \Div\wnh\big)\\
	\begin{aligned}
	& \le C \left[ \frac{2\beta_i}{\tau_n} \big\|[\bu-\uht]_{n-1}\big\|_0
	+ \big\|\bu'(t_{n,i}) -\uht'(t_{n,i}) \big\|_0
	+ \nu\big\|\nabla \enh\big\|_0\right] \left|\wnh\right|_1\\
	&\qquad+ \left|n\left(\bu(t_{n,i}), \bu(t_{n,i}), \wnh\right)-n\left(\unh, \unh, \wnh\right)\right|
	+ \left|S_h(\unh, \wnh)\right|.
	\end{aligned}
	\end{multline}
	The difference of the nonlinear terms is split as follows
	\begin{multline*}
	n\big(\bu(t_{n,i}),\bu(t_{n.i}),\wnh\big) - n\big(\unh,\unh,\wnh\big)\\
	\begin{aligned}
	& = n(\bu(t_{n,i}) - j_h \bu(t_{n,i}), \bu(t_{n,i}), \wnh)
	+ n(\bxt(t_{n,i}), j_h \bu(t_{n,i}), \wnh)\\
	& \qquad + n(j_h \bu(t_{n,i}),\bu(t_{n,i}) - j_h \bu(t_{n,i}),\wnh).
	\end{aligned}
	\end{multline*}
	We estimate term-by-term using generalised H\"older's and Friedrichs'
	inequalities. We get for the first term
	\begin{align*}
	n(\bu(t_{n,i}) - j_h \bu(t_{n,i}), \bu(t_{n,i}), \wnh)
	& \le \|\bu(t_{n,i}) - j_h \bu(t_{n,i})\|_0  \|\nabla \bu(t_{n,i})\|_{\infty}
	\|\wnh\|_0\\
	& \qquad + \frac{1}{2}\|\Div (\bu(t_{n,i}) - j_h \bu(t_{n,i}))\|_0
	\|\bu(t_{n,i})\|_{\infty} \|\wnh\|_0\\
	& \le C h^r \|\bu(t_{n,i})\|_{r+1,\infty} \|\bu(t_{n,i})\|_{1,\infty} |\wnh|_1
	\end{align*}
	where~\eqref{j1} was applied. The second term gives
	\begin{align*}
	n(\bxt(t_{n,i}), j_h \bu(t_{n,i}), \wnh)
	& \le \|\bxt(t_{n,i})\|_0 \|\nabla j_h \bu(t_{n,i})\|_{\infty} \|\wnh\|_0\\
	& \qquad + C_d S_h(\bxt(t_{n,i}),\bxt(t_{n,i}))^{1/2}
	\|j_h\bu(t_{n,i})\|_{\infty} \|\wnh\|_0\\
	& \le C \big( \|\bxt(t_{n,i})\|_0 + S_h(\bxt(t_{n,i}),\bxt(t_{n,i}))^{1/2} \big)
	\|\bu(t_{n,i})\|_{1,\infty} |\wnh|_1
	\end{align*}
	using Lemma~\ref{lem:discdiv} and~\eqref{jstab}. We obtain the estimate
	\begin{align*}
	n(j_h \bu(t_{n,i}),\bu(t_{n,i}) - j_h \bu(t_{n,i}),\wnh)
	& \le 
	\|j_h \bu(t_{n,i})\|_0
	\|\nabla (\bu(t_{n,i}) - j_h \bu(t_{n,i}))\|_{\infty}
	\|\wnh\|_0 \\
	& \qquad + \frac{1}{2} \|\Div j_h \bu(t_{n,i})\|_0 
	\|\bu(t_{n,i}) - j_h \bu(t_{n,i})\|_{\infty} \|\wnh\|_0\\
	& \le C h^r \|\bu(t_{n,i})\|_{r+1,\infty} \|\bu(t_{n,i})\|_{1,\infty}
	|\wnh|_1
	\end{align*}
	for the third term where~\eqref{j1} and~\eqref{jstab} have been used.
	
	For the last term on the right-hand side of \eqref{eq:pressure_e1}, the
	Cauchy--Schwarz inequality, the stability property of the fluctuation
	operator $\kappa_K$, and~\eqref{kappa} give
	\begin{align*}
	S_h\left(\unh,\wnh\right) &\le S_h\left(\unh,\unh\right)^{1/2}
	S_h\left(\wnh,\wnh\right)^{1/2} \le C\sqrt{\mu_h^{\textrm{max}}}
	S_h\left(\unh,\unh\right)^{1/2} \; |\wnh|_1\\
	& \le C\sqrt{\mu_h^{\textrm{max}}}
	\Big( S_h\left(\enh,\enh\right)^{1/2}
	+ S_h\big(\bu(t_{n,i}),\bu(t_{n,i})\big)^{1/2}\Big)
	|\wnh|_1\\
	& \le C \Big( S_h\left(\enh,\enh\right)^{1/2} + h^r \|\bu\|_{r+1,2} \Big)
	|\wnh|_1.
	\end{align*}
	Inserting all bounds together with~\eqref{eq:pre_temp1}
	into~\eqref{eq:pressure_inf_sup} leads to 
	\begin{align*}
	\|\Qnh\|_0 \le \frac{C}{\beta_0}
	\bigg[ & \big\|p(t_{n,i}) -i_h p(t_{n,i})\big\|_0
	+ \frac{2\beta_i}{\tau_n} \big\|[\bu-\uht]_{n-1}\big\|_0
	+ \big\|\bu'(t_{n,i}) -\uht'(t_{n,i}) \big\|_0 \\
	&+ \nu\big\|\nabla\enh\big\|_0
	+ h^r \|\bu(t_{n,i})\|_{r+1,\infty}
	\big(\|\bu(t_{n,i})\|_{1,\infty} + 1\big)\\
	& + \|\bxt(t_{n,i})\|_0 + S_h(\bxt(t_{n,i}),\bxt(t_{n,i}))^{1/2}
	+ S_h(\enh,\enh)^{1/2}
	\bigg].
	\end{align*}
	After squaring, multiplying by $\widehat{\omega}_i \tau_n/2$, and summing
	over $i=1,\ldots,k+1$, we get
	\begin{align}\label{eq:pre_temp2}
	\intin \| \tht \|_0^2 \le C \bigg[& \Qn{\|p -i_h p\|_0^2}
	+ \frac{1}{\tau_n} \big\|[\bu-\uht]_{n-1}\big\|_0^2
	+ \Qn{\|\bu' -\uht' \|_0^2} \nonumber\\
	&+ \Qn{\nu\|\nabla \eht\|_0^2}
	+ \tau_n h^{2r} \|\bu\|_{C(I_n,W^{r+1,\infty})}^2
	(\|\bu\|_{C(I_n,W^{1,\infty})}^2+1)
	\nonumber\\
	& + \Qn{\|\bxt\|_0^2} + \Qn{S_h(\bxt,\bxt)}
	+ \Qn{S_h(\eht,\eht)}
	\bigg].
	\end{align}
	We estimate the first term by the approximation properties~\eqref{j2} of
	$i_h$ and the second term by using
	Theorem~\ref{thm:fdp_velocity_estimates}.
	To estimate the third term in~\eqref{eq:pre_temp2},
	we proceed as follows
	\begin{align*}
	\Qn{\|\bu'-\uht'\|_0^2} & 
	\le 2 \Qn{\|\bu'-\tu'\|_0^2} + 2 \Qn{\|\tu'-\uht'\|_0^2} \le 2 \Qn{\|\bu'-\tu'\|_0^2}
	+ \frac{2C_{\mathrm{inv}}^2}{\tau_n^2} \Qn{\|\tu-\uht\|_0^2}\\
	& = 2 \Qn{\|\bu'-\tu'\|_0^2}
	+ \frac{4C_{\mathrm{inv}}^2}{\tau_n^2} \Qn{\|\bu-\uht\|_0^2}
	\end{align*}
	where an inverse inequality in time was applied in the second step.
	Furthermore, we exploited that $\bu$ and $\tu$ coincide in all quadrature
	points using by $Q_n$. The appearing terms can be bounded by the
	interpolation properties~\eqref{t3} and the estimate from
	Theorem~\ref{thm:fdp_velocity_estimates}.
	
	The remaining terms in~\eqref{eq:pre_temp2} can be estimated by using
	again Theorem~\ref{thm:fdp_velocity_estimates}. We end up with
	\[
	\int_0^T \|p-\pht\|_0^2 \le C \frac{1}{\tau_{\textrm{min}}^2} E^2
	\]
	with $E$ given in~\eqref{eq:E}.
\end{proof}

\section{Numerical studies}\label{sec:numerics}
This section presents the numerical studies to verify the theoretical
predictions of the previous sections. For this purpose, we consider the two
different examples. In the first example a problem will be studied where
the spatial error dominates. With this example, the order of convergence in
space can be assessed in different norms. The second example where the
temporal error dominates will show the convergence order in time.

We choose $T=1$ as final time while the computational domain for both
examples is $\Omega=(0,1)^2$ and the
simulations were performed on uniform quadrilateral grids where the
coarsest grid (level 1) is obtained by dividing the unit square into four
congruent squares. We used in our numerical simulations mapped finite
element spaces, see~\cite{Cia78}, where the enriched spaces on the
reference cell $\widehat{K}=(-1,1)^2$ are given by 
\[
\mathbb{Q}_r^{ \mathrm{bubble}}(\widehat{K}) := \mathbb{Q}_r + \mathrm{span}\left\{(1-\hat{x}_1^2)(1-\hat{x}_2^k)\hat{x}_i^{r-1}, \; i=1,2 
\right\}.
\]
The combination $\mathbb{Q}_r^{\mathrm bubble}(\hat{K})$ with $D(K) =
\mathbb{P}_{r-1}(K)$ provides for $r\ge 2$ suitable spaces for LPS methods,
see~\cite{MT15}. The stabilization parameter for the dominant convection
case is set to $\mu_K=0.1$.

We will use the norm \(\|\eht\|_{\mathrm{S}} \) that is the combinations of the 
terms of the left-hand side of \eqref{eq:velo_est}
\[
\|\eht\|_{\mathrm{S}} = \Big(\|\eht(t_n^-)\|_0^2
+  \|[\eht]_{n-1}\|_0^2 +
\Qn{\nu \|\nabla \eht\|_0^2 + S_h(\eht,\eht)} + \Qn{\|\eht\|_0^2}\Big)^{1/2}.
\]
\subsection{Example with dominating space error}
We consider the first example where the time error is negligible. The right-hand side $\ff$ and the initial condition $\bu_0$ are chosen such that
\begin{align*}
\bu(t,x,y) &= \sin(t) \Big(\sin(\pi x) \sin(\pi y), \cos(\pi x) \cos(\pi y) \Big)^T \\
p(t,x,y) &= \sin(t) \left(\sin(\pi x) + \cos(\pi y) - \frac2\pi \right)
\end{align*}
is the solution of \eqref{eq:nse} equipped with the non-homogeneous Dirichlet boundary 
conditions. 

To illustrate the convergence order in space, we performed the numerical
simulations using the time discretization scheme dG(1) with a small time
step $\tau=1/800$. Figure~\ref{fig:space_errors} presents the convergence
results for the simulations performed with the finite element spaces
$\vv_h/Q_h=\mathbb{Q}_2^{\mathrm{bubble}}/\mathbb{P}_1^{\mathrm{disc}}$ and 
the projection space $D_h(K)=\mathbb{P}_1(K)$ for the LPS method and
$\vv_h/Q_h=\mathbb{Q}_2/\mathbb{P}_1^{\mathrm{disc}}$ for the standard finite element method. One can clearly see from the plots that the corresponding convergence orders are obtained in all norm as predicted in \eqref{eq:velo_est}.
\begin{figure}[htb!]
\begin{center}
\includegraphics[scale=1]{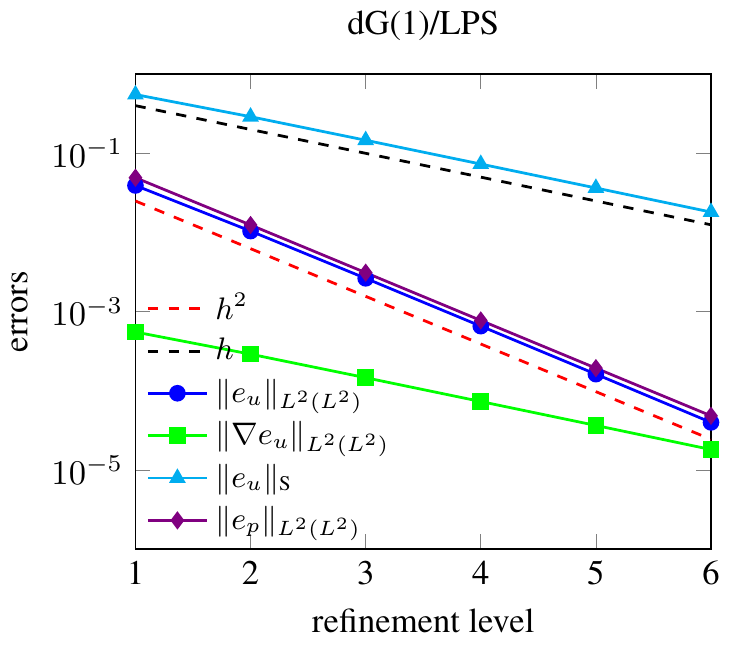}~
\includegraphics[scale=1]{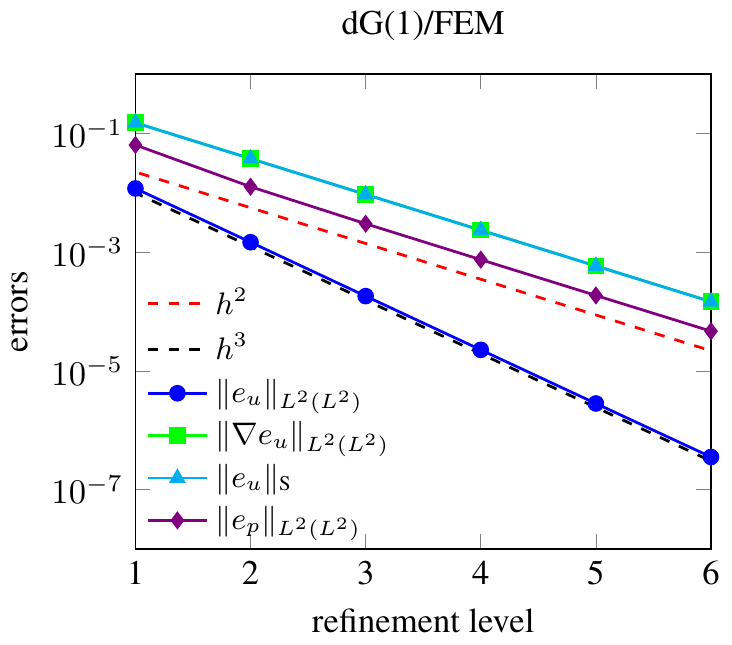}
\caption{Convergence orders with respect to the spatial mesh width, 
		$\nu=10^{-6}$,
		$\vv_h/Q_h=\mathbb{Q}_2^{\mathrm{bubble}}/\mathbb{P}_1^{\mathrm{disc}}$ (left), and 
		$\nu=1 $, $\vv_h/Q_h=\mathbb{Q}_2/\mathbb{P}_1^{\mathrm{disc}}$ (right).}\label{fig:space_errors}
		\end{center}
\end{figure}

\subsection{Example with dominating time error}
This example studies the convergence orders in time. The right-hand side $\ff$ and the initial condition $\bu_0$ are chosen such that
\begin{align*}
u_1(t,x,y) &= x^2 (1-x)^2 \left(2y(1-y)^2 - 2y^2(1-y)\right) \; \sin(10\pi t),\\
u_2(t,x,y) &= -\left(2x(1-x)^2 - 2x^2(1-x)\right)y^2(1-y)^2\; \sin(10\pi t), \\
p(t,x,y)   &= -(x^3+y^3-0.5)\left(1.5+0.5\sin(10\pi t)\right)
\end{align*}  
is the solution of \eqref{eq:nse} with homogeneous Dirichlet boundary conditions.

In order to study the convergence order in time, the simulations were
performed with
$\vv_h/Q_h=\mathbb{Q}_4^{\mathrm{bubble}}/\mathbb{P}_3^{\mathrm{disc}}$ and
$D(K)=\mathbb{P}_3(K)$ for all $K\in \mathcal{T}_h$ and a mesh which consists of $8\times 8$ squares. The calculations were done for dG(1) and dG(2) with the time step lengths $\tau=0.1\times 10^i,\; i=0,\ldots,6$.

Figure~\ref{fig:time_errors_dg1} report the convergence order for the methods dG($k$), $k=1,2$ in combination with the spatial stabilization by LPS. The errors in different norms are plotted against the different refinement levels in time. 
It can be seen that the dG($k$) method is accurate of order $k+1$ in the $L^2(L^2)$-norm while the order $k+1/2$ is observed in the $\|\cdot\|_{\mathrm{S}}$-norm. These results are in agreement with the theoretical 
predictions in Theorem~\ref{thm:fdp_velocity_estimates}. 

Comparing the convergence order for the pressure in $L^2(L^2)$-norm, one can see that the convergence is one order better than predicted by the theory~\eqref{eq:press_est}. This is 
caused by the smoothness of the pressure. However, if we consider the problem where the pressure is replaced by the rough function
\[
p(t,x,y)=-(x^3+y^3-0.5)\left(1.5+0.5t^{4/3}\right)
\]
then one can see in figure~\ref{fig:rough_pressure}, the convergence order for the pressure is limited by the smoothness. On the other hand, the convergence order of the velocity are not influenced by the smoothness of the pressure.

A simple post-processing of the time-discrete solution $\bu_{h,\tau}$ allows to obtain numerical approximations which are order $k+2$ in the integral based norms, see~\cite{ABM17,MS11}. 
The result for the post-processed solution are presented in figure~\ref{fig:time_errors_pp}. One can see the improved accuracy in the $L^2(L^2)$-norm and $\|\cdot\|_\mathrm{ S}$-norm.
\begin{figure}[htb!]
\begin{center}
    \includegraphics[scale=1]{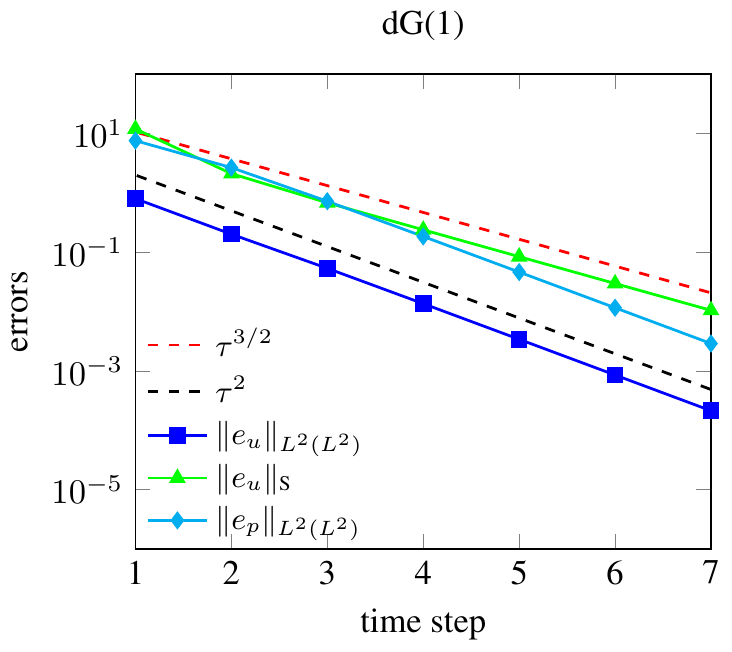}~
    \includegraphics[scale=1]{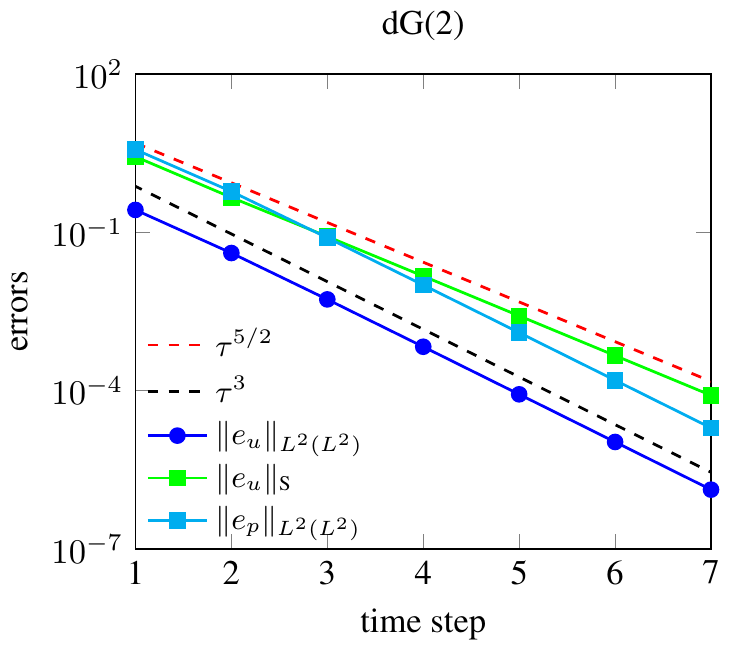}
	\caption{Convergence orders for different errors of the solution for dG(1) and dG(2).}\label{fig:time_errors_dg1}
	\end{center}
\end{figure}
\begin{figure}[htb!]
\begin{center}%
	\includegraphics[scale=1]{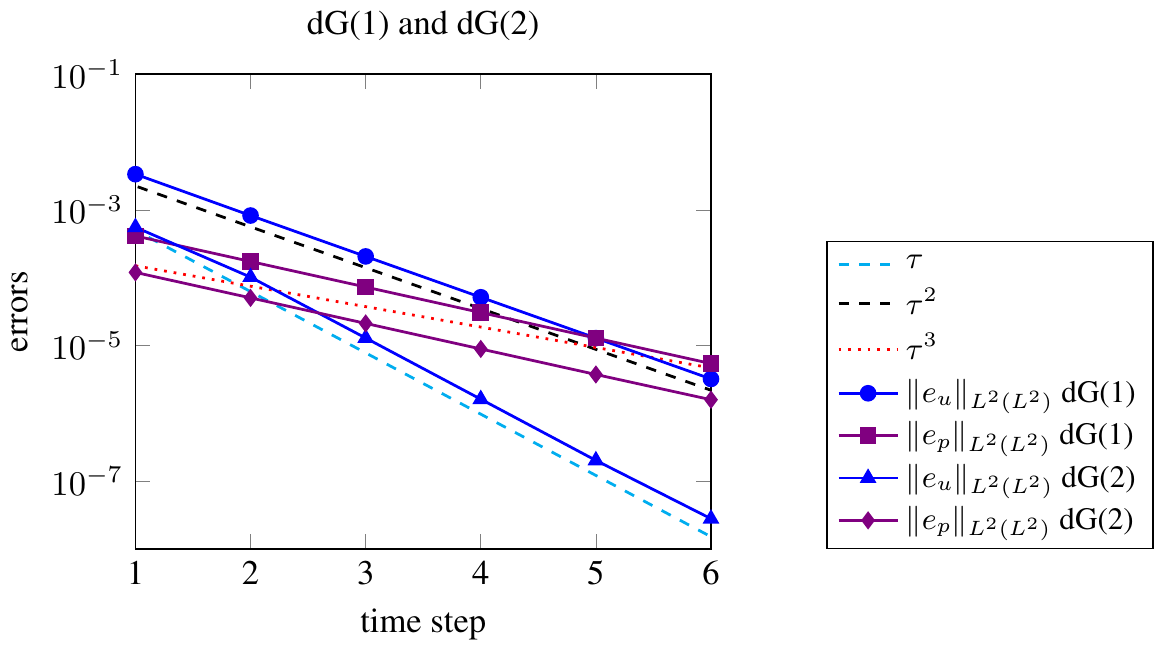}
	\caption{Example with rough pressure: convergence orders for dG(1) and dG(2) methods.}\label{fig:rough_pressure}
	\end{center}
\end{figure}

\begin{figure}[htb!]
\begin{center}
	\includegraphics[scale=1]{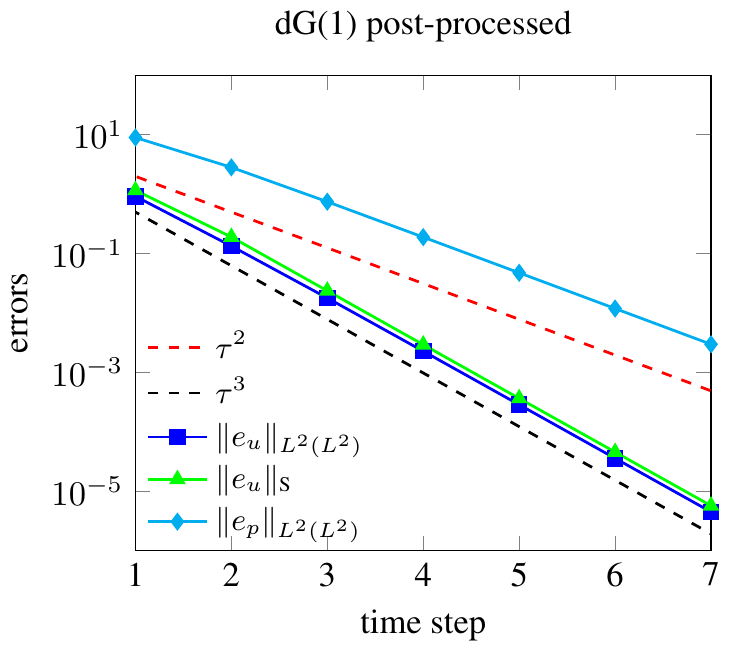}~
	\includegraphics[scale=1]{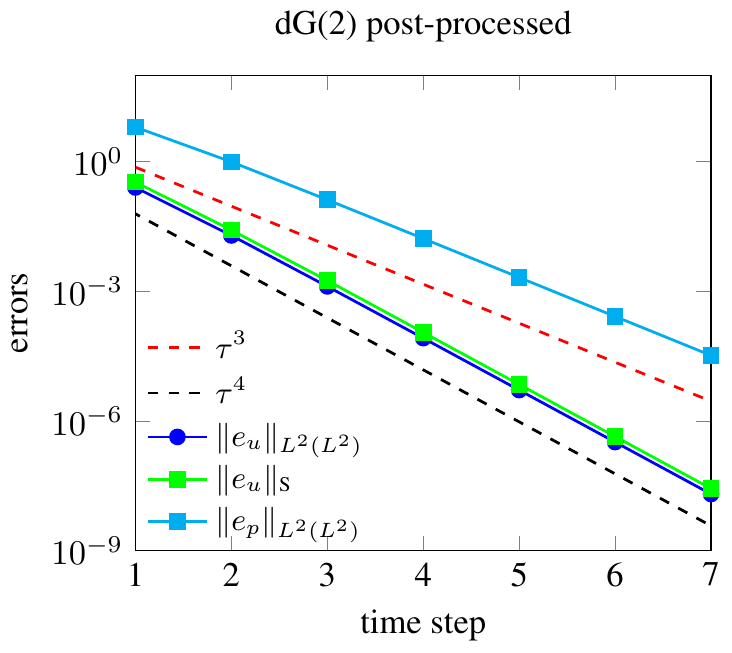}
	\caption{Convergence orders for different errors of the post-processed solution for dG(1) and dG(2) methods.}
	\label{fig:time_errors_pp}
\end{center}
\end{figure}


\end{document}